\newtheorem{theorem}{Theorem}[section]
\newtheorem{lemma}[theorem]{Lemma}
\newtheorem{proposition}[theorem]{Proposition}
\newtheorem{corollary}[theorem]{Corollary}
\newtheorem{remark}[theorem]{Remark}
\newcommand{\N}{\mathbb{N}}
\newcommand{\Z}{\mathbb{Z}}
\newcommand{\Q}{\mathbb{Q}}
\newcommand{\R}{\mathbb{R}}
\newcommand{\F}{\mathbb{F}}
\newcommand{\Ocal}{\mathcal{O}}
\newcommand{\Pcal}{\mathcal{P}}
\newcommand{\JR}{{\rm JR }}
\newcommand{\disc}{{\rm disc\: }}
\begin{document}

\title{Dedekind's criterion for the monogenicity of a number field versus Uchida's and L\"uneburg's}
\author{Xavier Vidaux and Carlos R. Videla}

\maketitle

\begin{abstract}
We compare three different characterizations, due respectively to R. Dedekind, K. Uchida, and H. L\"uneburg, of when $\Z[\theta]$ is the ring of integers of $\Q(\theta)$, and apply our results to some concrete $2$-towers of number fields.\footnote{The two authors have been partially supported by the first author Fondecyt project 1170315 from Conicyt, Chile.}
\end{abstract}

MSC: Primary 11R04, Secondary 11U05

Keywords: Monogenic fields, Power basis, Integrally closed, Julia Robinson number, Totally real, Iterates of quadratic polynomials



\section{Introduction}



A number field $K$ is said to be monogenic if it has a power integral basis, namely, if there exists $\theta$ in its ring of integers $\Ocal_K$ such that $\Ocal_K=\Z[\theta]$. Since Hensel's discovery in 1894 that the set of indices of integers of $K$ coincide with the set of absolute values of the integral values of a certain form over $\Z$ depending only on $K$ and a (given) integral basis, a lot of work has been done to study these so-called index forms --- see \cite[2.2, Section 6, p. 64]{Nark04} for details and a survey of results in this direction. In particular, by works of K\'alm\'an Gy\H{o}ry \cite{Gyory73}, it is decidable to know whether or not a number field is monogenic, as he gives an algorithm to list all possible generators of power basis, modulo some computable equivalence relation. Hence also it is decidable to know whether a given integral $\theta$ generates a power basis --- see \cite{EG17}. Nevertheless, for the latter problem, there are much more efficient algorithms, especially when one deals with multiple number fields simultaneously, or infinitely many. On a theoretical side, we found three different characterizations in the literature, due respectively to Dedekind, K\^oji Uchida, and Heinz L\"uneburg, of when $\Z[\theta]$ is the ring of integers of $\Q(\theta)$, and all the three are easily seen to be effective. In this paper, we will make explicit the connection between the three, which seems to be only partially known in the literature, and apply our results to some concrete $2$-towers of number fields, obtaining a local version of a recent theorem of Marianela Castillo \cite{Cas18} which links the question of monogenicity (for these towers) with a concrete problem in dynamical arithmetic. 

If $K=\Q(\theta)$ is a number field, $\Ocal_K$ its ring of integers, and $p$ a rational prime, it is well-known since Dedekind \cite{Dedekind1878} that the prime decomposition of $p\Ocal_K$ has the same shape as the prime decomposition of the minimal polynomial $f$ of $\theta$ modulo $p$, as long as $p$ does not divide the index of $\Z[\theta]$ in $\Ocal_K$. Much less known is the first general criterium for a prime $p$ not to divide the index, also involving the decomposition of $\bar f$, which appears in the same work (see for instance \cite{KumarKhanduja07} for an english version).  

\begin{theorem}[R. Dedekind, 1878]\label{ded1}
Let $\theta$ be an algebraic integer, $f$ its minimal polynomial, $K=\Q(\theta)$ and $\Ocal_K$ its ring of integers. Let $p$ be a rational prime. Let $\bar f=\varphi_1^{e_1}\dots\varphi_r^{e_r}$ be the decomposition of $\bar f$ in irreducible factors over the ring of polynomials $\F_p[t]$ over the field with $p$ elements. Let $\mu_i\in\Z[t]$ be any lifting of $\varphi_i$ and let $g\in\Z[t]$ be such that $f=\mu_1^{e_1}\dots\mu_r^{e_r}+pg$. The following are equivalent:
\begin{enumerate}
\item $\Z[\theta]$ is $p$-maximal (i.e. $p$ does not divide the index of $\Z[\theta]$ in $\Ocal_K$). 
\item For all $i$, either $e_i=1$ or $\varphi_i$ does not divide $\bar g$ in $\F_p[t]$.
\end{enumerate}
\end{theorem}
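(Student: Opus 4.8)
\emph{Proof plan.} First I would recast condition (2) in a symmetric form. Put $g_1:=\mu_1\cdots\mu_r$ and $h_1:=\mu_1^{e_1-1}\cdots\mu_r^{e_r-1}$, so that $g_1h_1=\mu_1^{e_1}\cdots\mu_r^{e_r}$ and hence $f=g_1h_1+pg$. Here $\bar g_1=\varphi_1\cdots\varphi_r$ is the radical of $\bar f$ and $\bar h_1=\varphi_1^{e_1-1}\cdots\varphi_r^{e_r-1}$, so $\gcd(\bar g_1,\bar h_1)=\prod_{e_i\ge 2}\varphi_i$ in $\F_p[t]$, and therefore (2) is equivalent to
\[
\gcd(\bar g_1,\bar h_1,\bar g)=1\quad\text{in }\F_p[t].
\]
I will prove the theorem in this equivalent form: $\Z[\theta]$ is $p$-maximal $\iff\gcd(\bar g_1,\bar h_1,\bar g)=1$.

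For $(2)\Rightarrow(1)$ I would work locally. Let $A=\Z_{(p)}[\theta]$; then $\Z[\theta]$ is $p$-maximal iff $A$ equals its integral closure in $K$, which (since $A$ is a one-dimensional Noetherian domain) holds iff $A_{\mathfrak m}$ is a DVR for every maximal ideal $\mathfrak m$ of $A$. Every such $\mathfrak m$ contains $p$, and through $A/pA\cong\F_p[t]/(\bar f)\cong\prod_i\F_p[t]/(\varphi_i^{e_i})$ they are exactly the ideals $\mathfrak m_i=(p,\mu_i(\theta))$, with residue field $\F_p[t]/(\varphi_i)$; recall that a one-dimensional Noetherian local domain is a DVR iff its maximal ideal is principal. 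Set $w_i:=\prod_{k\neq i}\mu_k^{e_k}$, so that $\bar w_i=\bar f/\varphi_i^{e_i}$ is coprime to $\varphi_i$ and $f=\mu_i^{e_i}w_i+pg$; thus $w_i(\theta)$ is a unit in $A_{\mathfrak m_i}$, and from $f(\theta)=0$ one gets $\mu_i(\theta)^{e_i}w_i(\theta)=-pg(\theta)$. If $e_i=1$ this reads $\mu_i(\theta)=-pg(\theta)w_i(\theta)^{-1}\in pA_{\mathfrak m_i}$, so $\mathfrak m_iA_{\mathfrak m_i}=(p)$ is principal. If $e_i\ge 2$ and $\varphi_i\nmid\bar g$, then $g(\theta)$ is also a unit in $A_{\mathfrak m_i}$, so $p=-\mu_i(\theta)^{e_i}w_i(\theta)g(\theta)^{-1}\in\mu_i(\theta)^{e_i}A_{\mathfrak m_i}\subseteq\mu_i(\theta)^2A_{\mathfrak m_i}$, whence $\mathfrak m_iA_{\mathfrak m_i}=(\mu_i(\theta))$ is principal. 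Under (2) every $i$ falls into one of these two cases, so every $A_{\mathfrak m_i}$ is a DVR and $\Z[\theta]$ is $p$-maximal.

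For $(1)\Rightarrow(2)$ I would prove the contrapositive by producing an explicit element of $\Ocal_K\setminus\Z[\theta]$. Assume some $j$ has $e_j\ge 2$ and $\varphi_j\mid\bar g$. Factor $g_1=\mu_jp_j$ and $h_1=\mu_j^{e_j-1}q_j$ with $p_j:=\prod_{k\neq j}\mu_k$ and $q_j:=\prod_{k\neq j}\mu_k^{e_k-1}$ (so $\bar p_j,\bar q_j$ are coprime to $\varphi_j$), and set
\[
x:=\tfrac1p\,h_1(\theta)p_j(\theta),\qquad I:=p\Z[\theta]+g_1(\theta)\Z[\theta].
\]
Using $f(\theta)=0$ one computes $xp=h_1(\theta)p_j(\theta)\in\Z[\theta]$ and $xg_1(\theta)=-g(\theta)p_j(\theta)\in\Z[\theta]$. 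Since $\Z[\theta]/I\cong\F_p[t]/(\bar g_1)$, an element $c(\theta)$ lies in $I$ iff $\bar g_1\mid\bar c$; reading off exponents from the factorizations above one checks that $\bar g_1$ divides both $\overline{h_1p_j}$ and $\overline{gp_j}$ — for the first this uses $e_j\ge 2$ (so that $\varphi_j^{e_j-1}$ still appears), for the second it uses $\varphi_j\mid\bar g$. Hence $xI\subseteq I$, so $x\in(I:I)$; as $(I:I)$ is a subring of $K$ lying inside $\tfrac1p\Z[\theta]$, it is finitely generated over $\Z$, hence integral over $\Z$, hence contained in $\Ocal_K$. But $x\notin\Z[\theta]$, because $\bar f=\prod\varphi_i^{e_i}$ does not divide $\overline{h_1p_j}=\varphi_j^{e_j-1}\prod_{k\neq j}\varphi_k^{e_k}$ (the $\varphi_j$-exponent has dropped from $e_j$ to $e_j-1$). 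Since $px\in\Z[\theta]$, the class of $x$ is a nonzero $p$-torsion element of $\Ocal_K/\Z[\theta]$, so $p\mid[\Ocal_K:\Z[\theta]]$ and $\Z[\theta]$ is not $p$-maximal.

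The main obstacle is the $(1)\Rightarrow(2)$ step: one has to hit on the right element $x$ — morally one removes a single factor $\mu_j$ from the radical $g_1$, keeping just enough multiplicity that dividing by $p$ still lands in $\Ocal_K$ — and then the two divisibility checks $xp\in I$ and $xg_1(\theta)\in I$ are precisely where the hypotheses $e_j\ge 2$ and $\varphi_j\mid\bar g$ get consumed (drop either one and one of the two divisibilities by $\bar g_1$ fails). The $(2)\Rightarrow(1)$ step is routine once the correspondence $\mathfrak m_i\leftrightarrow\varphi_i$ is set up, modulo the standard characterization of DVRs used above. One could instead treat both directions uniformly by computing the multiplier ring $(I:I)$ and showing $(I:I)=\Z[\theta]\iff\gcd(\bar g_1,\bar h_1,\bar g)=1$, at the price of also invoking the ``one-step enlargement'' fact that $\Z[\theta]$ is $p$-maximal iff $(I:I)=\Z[\theta]$.
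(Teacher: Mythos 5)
Your proof is correct. It cannot be ``the same as the paper's'' because the paper never actually proves Theorem~\ref{ded1} --- it quotes it from Dedekind with a pointer to the Kumar--Khanduja reference --- but the machinery the paper does develop (Section~\ref{secHilfsaatz} plus the witness translations of Section~\ref{UGL}) amounts to a proof, and the comparison is instructive. Your direction $(2)\Rightarrow(1)$ is essentially the paper's Theorem~\ref{Hilfsaatz} (L\"uneburg's test) run in parallel over all the maximal ideals $\mathfrak m_i=(p,\mu_i(\theta))$ above $p$: the identity $\mu_i(\theta)^{e_i}w_i(\theta)=-pg(\theta)$ and the case split according to which of $w_i(\theta)$, $g(\theta)$ is a unit is exactly the computation in that proof, with $\mu_\Pcal=\mu_i$ and $h=\mu_i^{e_i-1}w_i$. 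Where you genuinely diverge is $(1)\Rightarrow(2)$: the paper's converse (Lemma~\ref{UchLunLem} and Proposition~\ref{UchLun}) stays local and extracts the coprimality of $\bar\mu$ and $\bar g$ from the valuation-theoretic structure of the discrete valuation ring $L$, by comparing the orders of $p$ and of $\mu(\theta)$ at the maximal ideal; you instead argue globally and constructively, exhibiting the explicit element $x=\tfrac{1}{p}h_1(\theta)p_j(\theta)$ of the multiplier ring $(I:I)$ of the radical ideal $I=(p,g_1(\theta))$ as a witness that $p\mid[\Ocal_K:\Z[\theta]]$. This is the classical Dedekind/Pohst--Zassenhaus route; it buys an explicit element of $\Ocal_K\setminus\Z[\theta]$ and avoids the DVR order analysis, at the (mild) cost of the integrality argument for $(I:I)$. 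The paper's route buys the finer prime-by-prime information --- exactly which localization fails --- which is what the authors need for Theorems~\ref{Hilfsaatziffintro} and~\ref{p2intro}. All your individual verifications check out: the reformulation $\gcd(\bar g_1,\bar h_1)=\prod_{e_i\ge2}\varphi_i$, the two divisibility tests in which $e_j\ge2$ and $\varphi_j\mid\bar g$ are each consumed exactly once, and the non-membership $x\notin\Z[\theta]$ via $\bar f\nmid\overline{h_1p_j}$.
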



In 1977, K. Uchida \cite{Uch} proposed the following characterization and used it to give a beautiful short proof of the fact that the ring of integers of the cyclotomic field $\Q(\zeta_n)$ is $\Z[\zeta_n]$, where $\zeta_n$ is a primitive $n$-th root of unity.  

\begin{theorem}[K. Uchida, 1977]\label{uch1}
Let $R$ be a Dedekind ring. Let $\theta$ be an element of some integral domain which contains $R$ and assume that $\theta$ is integral over $R$. The ring $R[\theta]$ is a Dedekind ring if and only if, for any maximal ideal $M$ of the polynomial ring $R[t]$, the minimal polynomial $f(t)$ of $\theta$ over $R$ is not contained in $M^2$.
\end{theorem}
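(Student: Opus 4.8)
The plan is to pass to the ring $A := R[\theta] \cong R[t]/(f)$ and to characterize when it is a Dedekind ring through its localizations at maximal ideals. First I would record the basic structure: since $\theta$ is integral over $R$ the minimal polynomial $f$ is monic, and since $R$ is integrally closed (being Dedekind) $f$ coincides with the minimal polynomial of $\theta$ over $K := \mathrm{Frac}(R)$ and is in particular irreducible over $K$; hence the kernel of the surjection $R[t]\twoheadrightarrow R[\theta]$ is exactly $(f)$, so $A$ is a domain, module-finite over $R$, hence Noetherian of Krull dimension $\le\dim R\le 1$. If $R$ is a field the statement is trivial: then $A=R[t]/(f)$ is itself a field, hence a Dedekind ring, and no square of a maximal ideal of $R[t]$ can contain the irreducible polynomial $f$, so both sides hold. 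I therefore assume henceforth that $R$ is not a field, so that $A$ has Krull dimension $1$, and I would invoke the standard fact that a one-dimensional Noetherian domain is a Dedekind ring if and only if all of its localizations at maximal ideals are discrete valuation rings.

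Next I would set up the dictionary between the maximal ideals of $A$ and those of $R[t]$: maximal ideals $\mathfrak m$ of $A$ correspond to maximal ideals $M$ of $R[t]$ containing $f$, with $A_{\mathfrak m}\cong R[t]_M/(f\,R[t]_M)$. For such an $M$, since $f$ is monic the ring $R[t]/M$ is integral over $R$; being a field integral over the domain $R$, the composite $R\to R[t]/M$ cannot be injective unless $R$ is a field, so $\mathfrak p:=M\cap R$ is a nonzero — hence maximal — prime of $R$. From the chain $0\subsetneq\mathfrak p R[t]\subsetneq M$ together with $\dim R[t]=\dim R+1=2$ one gets $\mathrm{ht}\,M=2$. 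Since $R$ is Dedekind it is a regular ring, hence so is $R[t]$, and therefore $S:=R[t]_M$ is a two-dimensional regular local ring, say with maximal ideal $\mathfrak M$, and $f\in\mathfrak M$.

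The technical heart of the argument is the following lemma, which I would prove by the usual embedding-dimension computation: if $(S,\mathfrak M)$ is a two-dimensional regular local ring with residue field $k$ and $f\in\mathfrak M$ is nonzero, then $S/(f)$ is a discrete valuation ring if and only if $f\notin\mathfrak M^2$. Indeed, by Krull's principal ideal theorem $S/(f)$ is one-dimensional and local, so it is a DVR exactly when it is regular, that is, when $\dim_k\mathfrak M/(\mathfrak M^2+fS)=1$; in the exact sequence relating this space to $\mathfrak M/\mathfrak M^2$ (which has $k$-dimension $2$), the image of $fS$ is the line $k\bar f$ when $f\notin\mathfrak M^2$ and is $0$ when $f\in\mathfrak M^2$, giving dimension $1$ or $2$ respectively. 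Finally I would transfer the membership $f\in\mathfrak M^2$ back downstairs: since $M^2$ is $M$-primary, $f\in M^2R[t]_M$ forces $f\in M^2$, and the converse is clear, so $f\in\mathfrak M^2\iff f\in M^2$. Assembling: $R[\theta]$ is a Dedekind ring $\iff$ $A_{\mathfrak m}$ is a DVR for every maximal ideal $\mathfrak m$ of $A$ $\iff$ $f\notin M^2$ for every maximal ideal $M$ of $R[t]$ containing $f$ $\iff$ $f\notin M^2$ for every maximal ideal $M$ of $R[t]$, the condition being vacuous whenever $f\notin M$.

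The main obstacle I anticipate is not any single difficult step but keeping the local picture exact: verifying that the maximal ideals of $R[t]$ one must test all have height $2$ with regular local localizations (this is precisely where the hypothesis "Dedekind'', rather than merely "one-dimensional Noetherian'', is used, via regularity of $R$ and hence of $R[t]$), and disposing cleanly of the degenerate case in which $R$ is a field. The regular-local-ring lemma is then a routine — if essential — piece of commutative algebra.
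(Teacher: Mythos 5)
Your proof is correct. Note that the paper does not actually prove Uchida's theorem --- it is quoted from \cite{Uch} --- so the natural comparison is with the local $\Z$-version the paper does prove (Theorem \ref{Hilfsaatziffintro}, via Theorem \ref{Hilfsaatz}, Proposition \ref{UchLun} and the surrounding lemmas), which follows L\"uneburg's elementary method: one fixes a maximal ideal $\Pcal=(p,\mu(\theta))$ of $\Z[\theta]$, writes $f=\mu h+pg$ explicitly, and decides whether the localization is a discrete valuation ring by a gcd condition on $\bar\mu,\bar g,\bar h$ in $\F_p[t]$, exhibiting an explicit generator ($p$ or $\mu(\theta)$) of the maximal ideal of the localization. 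You instead identify $R[\theta]_{\mathfrak m}$ with $R[t]_M/(f)$ for $M$ a height-two maximal ideal of the regular ring $R[t]$ and apply the embedding-dimension criterion for a hypersurface quotient of a two-dimensional regular local ring to be regular, the condition $f\notin M^2$ translating exactly into $f\notin\mathfrak M^2$. Your route is more structural and treats an arbitrary Dedekind base $R$ (including the degenerate field case) in one stroke, at the cost of invoking regularity of $R[t]$, the dimension formula, and Krull's principal ideal theorem; the paper's route is computationally explicit --- the decomposition $f=\mu h+pg$ and the gcd test are precisely what get used in Sections \ref{UGL} and \ref{torre} --- but is written only for $R=\Z$ and yields no generator-free criterion without the extra work of Proposition \ref{UchLun}. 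The delicate points in your argument, all of which you do handle correctly, are that the height-two and regularity claims are needed only for maximal ideals containing the monic $f$ (for other $M$ the condition is vacuous, and such $M$ may well have height one, so the restriction matters), and that $f\in M^2R[t]_M\cap R[t]=M^2$ because $M^2$ is $M$-primary.
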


Note that Uchida does not require the finite norm property for $R$, whereas the generalizations of Dedekind's criterion seem to always require it (see \cite{Albu79} for a generalization of Uchida's result). 

Finally, in 1984, H. L\"uneburg \cite{Lu84}, unaware of Uchida's work, discovered another test based on the well known fact that a ring is a Dedekind domain if and only if it is noetherian of Krull dimension $1$ and its localization at every prime is a discrete valuation ring \cite[Theorem 9.3, p. 95]{AtMac69}. He also uses his test to give an alternative proof of the fact that $\Z[\zeta_n]$ is the ring of integers of $\Q(\zeta_n)$, which is more involved than that of Uchida, although it is actually his general theorem which will be useful for studying our $2$-towers. 

If $\Pcal$ is a maximal ideal of $\Z[\theta]$, with $\theta$ integral, we denote by $\mu_\Pcal$ the monic polynomial of least degree such that $\mu_\Pcal(\theta)\in\Pcal$. 

\begin{theorem}[H. L\"uneburg, 1984]\label{Hilfsaatzif}
Let $\theta$ be an algebraic integer and $f$ its minimal polynomial. Let $\Pcal$ be a maximal ideal of $\Z[\theta]$. Let $p$ be the rational prime below $\Pcal$. If there exists $g,h\in\Z[t]$ such that $f=\mu_\Pcal h+pg$ and $\gcd(\bar \mu_\Pcal,\bar g,\bar h)=1$ in $\F_p[t]$, then the localization $\Z[\theta]_\Pcal$ of $\Z[\theta]$ at $\Pcal$ is a discrete valuation ring.
\end{theorem}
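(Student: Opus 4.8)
The goal is to show that $\Z[\theta]_\Pcal$ is a DVR under the stated hypotheses. Since $\Z[\theta]_\Pcal$ is already a Noetherian local domain of Krull dimension $1$ (it is a localization of an order in a number field at a nonzero prime), by the characterization cited just before the theorem it suffices to show it is integrally closed, or equivalently that its maximal ideal $\Pcal\Z[\theta]_\Pcal$ is principal. The plan is to exhibit an explicit generator. A natural candidate: I expect the maximal ideal of $\Z[\theta]_\Pcal$ to be generated by either $p$ or $\mu_\Pcal(\theta)$ (more precisely, by whichever of these has the smaller $\Pcal$-adic valuation), and the gcd hypothesis should be exactly what forces the other one, together with $h(\theta)$, to lie in the ideal generated by that generator.

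First I would set up notation: write $\Lcal = \Z[\theta]_\Pcal$, $\mathfrak{m} = \Pcal\Lcal$ its maximal ideal, and observe that $\Lcal/\mathfrak{m} \cong \Z[\theta]/\Pcal = \F_p[t]/(\bar\mu_\Pcal)$ since $\bar\mu_\Pcal$ is irreducible in $\F_p[t]$ (this is why $\mu_\Pcal$ is the minimal-degree monic polynomial with $\mu_\Pcal(\theta)\in\Pcal$). So $\mathfrak{m}$ is generated by $p$ and $\mu_\Pcal(\theta)$. Next I would use the relation $f = \mu_\Pcal h + pg$ in $\Z[t]$, evaluated at $\theta$: since $f(\theta)=0$, we get $\mu_\Pcal(\theta)h(\theta) = -p\,g(\theta)$ in $\Z[\theta]$, hence in $\Lcal$. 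The gcd condition $\gcd(\bar\mu_\Pcal,\bar g,\bar h)=1$ means that not both $\bar g$ and $\bar h$ are divisible by $\bar\mu_\Pcal$; equivalently, at least one of $g(\theta)$, $h(\theta)$ is a unit in $\Lcal$ (being outside $\Pcal$).

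The case analysis then proceeds as follows. If $h(\theta)\notin\Pcal$, then $h(\theta)$ is a unit in $\Lcal$, so from $\mu_\Pcal(\theta)h(\theta)=-p\,g(\theta)$ we get $\mu_\Pcal(\theta) = -p\,g(\theta)h(\theta)^{-1} \in p\Lcal$, hence $\mathfrak{m} = (p,\mu_\Pcal(\theta)) = (p)$ is principal. Symmetrically, if $g(\theta)\notin\Pcal$ then $g(\theta)$ is a unit and $p = -\mu_\Pcal(\theta)h(\theta)g(\theta)^{-1}\in\mu_\Pcal(\theta)\Lcal$, so $\mathfrak{m}=(\mu_\Pcal(\theta))$ is principal. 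Since the gcd hypothesis guarantees at least one of these two cases holds, $\mathfrak{m}$ is principal in either case, and a Noetherian local domain of dimension one with principal maximal ideal is a DVR.

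The main subtlety to nail down carefully is the first reduction: why $\mathfrak{m} = \Pcal\Lcal$ is generated over $\Lcal$ by just $p$ and $\mu_\Pcal(\theta)$. This comes down to the claim that the composite $\F_p[t] \to \Z[\theta]/p\Z[\theta] \to \Z[\theta]/\Pcal$ is surjective with the second map having kernel generated by the image of $\mu_\Pcal$, which is precisely the minimality and monicity of $\mu_\Pcal$ together with irreducibility of $\bar\mu_\Pcal$ over $\F_p$ (the residue field $\Z[\theta]/\Pcal$ is a field, so the minimal polynomial of $\bar\theta$ over $\F_p$ is irreducible, and it must equal $\bar\mu_\Pcal$). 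I do not anticipate a serious obstacle beyond writing this bookkeeping cleanly; the arithmetic identity $\mu_\Pcal(\theta)h(\theta)=-p\,g(\theta)$ does all the real work, and the gcd condition is visibly the minimal hypothesis making the case split exhaustive.
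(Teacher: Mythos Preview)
Your proposal is correct and follows essentially the same approach as the paper: after establishing that the maximal ideal of $\Z[\theta]_\Pcal$ is generated by $p$ and $\mu_\Pcal(\theta)$ (the paper does this in Lemma~\ref{Hilfsaatz0} by showing directly that $(p,\mu_\Pcal(\theta))$ is prime, hence maximal, in $\Z[\theta]$, whereas you argue via the residue field), one uses the identity $\mu_\Pcal(\theta)h(\theta)=-p\,g(\theta)$ together with the gcd hypothesis to show in each case that one generator absorbs the other, making the maximal ideal principal. Your treatment of the case $h(\theta)\notin\Pcal$ (directly concluding $\mu_\Pcal(\theta)\in p\Lcal$) is in fact slightly cleaner than the paper's, which instead takes a general element of the maximal ideal and rewrites it.
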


Note the similarities between L\"uneburg's test and Dedekind's criterion. 

Using L\"uneburg's test, and also unaware of Uchida's work, K. Yamagata and M. Yamagishi \cite{YY} gave an alternative proof of the fact that the real subfield of any finite cyclotomic extension of $\Q$ is monogenic, with ring of integers $\Z[\zeta_n+\zeta_n^{-1}]$ --- a result that was apparently first proved by J. J. Liang \cite{Liang76}, and for which a very short proof is known \cite[Prop. 2.16, p. 16]{Washington97}. 

The relationship between the three approaches begs for an explanation. This is the main objective of this work. In Section \ref{UGL}, we will uncover the relation between the witness $M$ for not being a Dedekind domain in Uchida's approach, the witness $\Pcal$ for not being a discrete valuation ring in L\"uneburg's test, and the witnesses $p$ and $i$ for not being the ring of integers in Dedekind's approach. 

We will be able to do that thanks to the following theorem, which will be proven in Section \ref{secHilfsaatz} (with the notation as in Theorem \ref{Hilfsaatzif}, we will prove in Lemma \ref{lemgenmin} that $\bar\mu_\Pcal$ is irreducible and divides $\bar f$). 

\begin{theorem}\label{Hilfsaatziffintro}
Let $\theta$ be an algebraic integer and $f$ its minimal polynomial. Let $\Pcal$ be a maximal ideal of $\Z[\theta]$. Let $p$ be the rational prime below $\Pcal$. The following statements are equivalent: 
\begin{enumerate}
\item The localization $\Z[\theta]_{\Pcal}$ is a discrete valuation ring.
\item For every $g,h\in\Z[t]$ such that $f=\mu_\Pcal h+pg$, we have $\gcd(\bar\mu_\Pcal,\bar g,\bar h)=1$.
\item There exists $g,h\in\Z[t]$ such that $f=\mu_\Pcal h+pg$ and $\gcd(\bar \mu_\Pcal,\bar g,\bar h)=1$.
\item $f\notin (p,\mu_\Pcal(t))^2$.
\end{enumerate}
\end{theorem}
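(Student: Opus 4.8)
Write $\mu=\mu_\Pcal$, and recall from Lemma \ref{lemgenmin} that $\bar\mu$ is irreducible in $\F_p[t]$ and divides $\bar f$. Since $f$ is monic, the assignment $t\mapsto\theta$ identifies $\Z[\theta]$ with $\Z[t]/(f)$; the ideal $\mathfrak{M}:=(p,\mu(t))$ of $\Z[t]$ is maximal (as $\Z[t]/\mathfrak{M}\cong\F_p[t]/(\bar\mu)$ is a field), and since $p,\mu(\theta)\in\Pcal$ and $\mathfrak{M}$ is maximal, $\Pcal$ is precisely the image of $\mathfrak{M}$, so $\Pcal=(p,\mu(\theta))$ and $k:=\Z[t]/\mathfrak{M}=\Z[\theta]/\Pcal$. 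The plan is to prove the equivalences (2)$\Leftrightarrow$(3)$\Leftrightarrow$(4) by an elementary computation in $\Z[t]$, to deduce (3)$\Rightarrow$(1) by reproducing L\"uneburg's argument, and to close the loop with (1)$\Rightarrow$(4) by computing the dimension of $\Pcal/\Pcal^2$.

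The heart of the matter is the following elementary lemma: \emph{for any $g,h\in\Z[t]$ with $f=\mu h+pg$, one has $f\in\mathfrak{M}^2$ if and only if $\bar\mu$ divides both $\bar g$ and $\bar h$ in $\F_p[t]$.} For ($\Leftarrow$): if $\bar\mu\mid\bar h$ then $h=\mu h_1+ph_2$ for suitable $h_i\in\Z[t]$, so $\mu h\in(\mu^2,p\mu)\subseteq\mathfrak{M}^2$, and likewise $pg\in\mathfrak{M}^2$. For ($\Rightarrow$): writing $\mu h+pg=p^2a+p\mu b+\mu^2c$ and rearranging gives $\mu(h-pb-\mu c)=p(pa-g)$; since $p$ is a prime element of the UFD $\Z[t]$ and $p\nmid\mu$, it divides $h-\mu c$, so $\bar\mu\mid\bar h$, and substituting $h-pb-\mu c=pd$ then yields $\mu d=pa-g$, so $\bar\mu\mid\bar g$. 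As $\bar\mu$ is irreducible, $\gcd(\bar\mu,\bar g,\bar h)\in\{1,\bar\mu\}$, and the lemma says it equals $\bar\mu$ exactly when $f\in\mathfrak{M}^2$; in particular it does not depend on the choice of $g,h$. Since such $g,h$ exist — divide $f$ by the monic $\mu$, and observe that $\bar\mu\mid\bar f$ forces the remainder to be divisible by $p$ — this gives (2)$\Leftrightarrow$(3)$\Leftrightarrow$(4), all three being equivalent to $f\notin\mathfrak{M}^2$.

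For (3)$\Rightarrow$(1) I would run the argument behind Theorem \ref{Hilfsaatzif}: picking $g,h$ with $f=\mu h+pg$ and $\gcd(\bar\mu,\bar g,\bar h)=1$, irreducibility of $\bar\mu$ forces at least one of $g(\theta),h(\theta)$ to lie outside $\Pcal$, hence to be a unit in $A:=\Z[\theta]_\Pcal$; the relation $\mu(\theta)h(\theta)+pg(\theta)=0$ then shows that in $A$ either $p\in(\mu(\theta))$ or $\mu(\theta)\in(p)$, so the maximal ideal $\Pcal A=(p,\mu(\theta))A$ is principal, and since $A$ is a Noetherian local domain of Krull dimension one (a localization of a ring integral over $\Z$), it is a discrete valuation ring. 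For (1)$\Rightarrow$(4) I would argue the contrapositive: if $f\in\mathfrak{M}^2$ then $(f)\subseteq\mathfrak{M}^2$, so $\Z[\theta]/\Pcal^2\cong\Z[t]/\mathfrak{M}^2$ and hence $\Pcal/\Pcal^2\cong\mathfrak{M}/\mathfrak{M}^2$ as $k$-vector spaces; but $\dim_k\mathfrak{M}/\mathfrak{M}^2=2$ while a discrete valuation ring has one-dimensional cotangent space, so $A$ is not a DVR. I expect the one genuine input — everything else being bookkeeping — to be the equality $\dim_k\mathfrak{M}/\mathfrak{M}^2=2$, which follows either from $\Z[t]$ being a two-dimensional regular ring (so $\Z[t]_{\mathfrak{M}}$ is regular local of dimension two) or, hands-on, from the UFD computation above, which already shows the classes of $p$ and $\mu$ to be $k$-linearly independent in $\mathfrak{M}/\mathfrak{M}^2$. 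The remaining points — that $\Pcal/\Pcal^2$ is a module over the local ring $\Z[\theta]/\Pcal^2$ and so is unaffected by localizing at $\Pcal$, and that the asserted isomorphisms $\Z[\theta]/\Pcal^2\cong\Z[t]/\mathfrak{M}^2$ and $\Pcal/\Pcal^2\cong\mathfrak{M}/\mathfrak{M}^2$ are straightforward from $\Z[\theta]=\Z[t]/(f)$ — are routine.
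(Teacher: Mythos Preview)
Your proof is correct, and for the equivalences $(2)\Leftrightarrow(3)\Leftrightarrow(4)$ and $(3)\Rightarrow(1)$ it is essentially the paper's argument (your elementary lemma is the two-sided version of the paper's Lemma~\ref{UchLunLem2}, and your $(3)\Rightarrow(1)$ is Theorem~\ref{Hilfsaatz}).

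The genuine difference is in $(1)\Rightarrow(4)$. The paper argues this direction through Lemma~\ref{UchLunLem} and Proposition~\ref{UchLun}: assuming $L=\Z[\theta]_\Pcal$ is a DVR and $\bar\mu^2\mid\bar f$, it writes $pL=y^kL$ and $\mu(\theta)L=y^\ell L$, rules out $k=1$ by a divisibility computation using $\bar\mu^2\mid\bar f$, deduces $\ell=1$, and then manipulates explicit polynomial identities to force $\gcd(\bar\mu,\bar g)=1$ and $f\notin M^2$; the case $\bar\mu^2\nmid\bar f$ is handled separately by reducing modulo $p$. Your route is shorter and more structural: from $f\in\mathfrak{M}^2$ you pass to $\Z[\theta]/\Pcal^2\cong\Z[t]/\mathfrak{M}^2$, so $\Pcal/\Pcal^2\cong\mathfrak{M}/\mathfrak{M}^2$, which is $2$-dimensional because $\Z[t]_\mathfrak{M}$ is regular local of dimension $2$ (or by the same UFD computation you already did), contradicting the one-dimensional cotangent space of a DVR. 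This avoids the case split and the explicit tracking of $W_1,\dots,W_9$; the paper's approach, on the other hand, stays entirely hands-on and yields along the way the extra information that when $\bar\mu^2\mid\bar f$ one may choose $g$ with $\gcd(\bar\mu,\bar g)=1$ specifically.
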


Note that since $\bar \mu_\Pcal$ is irreducible, the condition $\gcd(\bar \mu_\Pcal,\bar g,\bar h)=1$ is equivalent to the condition: $\gcd(\bar \mu_\Pcal,\bar h)=1$ or $\gcd(\bar \mu_\Pcal,\bar g)=1$.

We will prove first that 3 implies 1 (for sake of completeness, as it is L\"uneburg's theorem), then that 4 implies 2, then that 1 is equivalent to 4, and finally that 1 implies 3. Note that item 2 implies trivially item 3 because $\bar\mu_\Pcal$ divides $\bar f$. 

Theorema \ref{Hilfsaatziffintro} can be seen as a local version of Uchida's theorem through L\"uneburg's criterium. As we were finalizing the writing of this note, we discovered the following result \cite[Lemma 2.1]{JKS16}, which can also be seen as a local version of Uchida's theorem, but through Dedekind's criterium:
 
\begin{theorem}[Jakhar-Khanduja-Sangwan, 2016]
Under the same notation as in Dedekind's criterium: $\Z[\theta]$ is $p$-maximal if and only if for any $i$ we have $f\notin(p,\mu_i(t))^2$. 
\end{theorem}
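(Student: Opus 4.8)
The plan is to reduce everything to a purely local, index-by-index statement inside $\Z[t]$: for each fixed $i$, I claim that
$$ f\in(p,\mu_i(t))^2 \iff \bigl(e_i\geq 2 \text{ and } \varphi_i\mid\bar g \text{ in }\F_p[t]\bigr). $$
Granting this, the theorem is immediate: the right-hand side is exactly the negation of the alternative ``$e_i=1$ or $\varphi_i\nmid\bar g$'' appearing in item (2) of Theorem \ref{ded1}, so ``$f\notin(p,\mu_i(t))^2$ for every $i$'' is literally condition (2), which by Dedekind's criterion is equivalent to $p$-maximality. I would also remark at the outset that $(p,\mu_i(t))$ really is a (maximal) ideal of $\Z[t]$, since $\Z[t]/(p,\mu_i)\cong\F_p[t]/(\varphi_i)$ is a field, so that forming its square is legitimate.

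To prove the claim, fix $i$ and abbreviate $\mu=\mu_i$, $\varphi=\bar\mu=\varphi_i$, $e=e_i$, and $N=\prod_{j\neq i}\mu_j^{e_j}$, so $f=\mu^e N+pg$ and $(p,\mu)^2=(p^2,p\mu,\mu^2)$. First, if $f\in(p,\mu)^2$, reducing modulo $p$ shows $\bar\mu^2=\varphi^2$ divides $\bar f=\prod_j\varphi_j^{e_j}$; since the $\varphi_j$ are distinct irreducibles this forces $e\geq 2$, so there is nothing to prove when $e=1$ and I may assume $e\geq 2$. Then $\mu^e N\in(\mu^2)\subseteq(p,\mu)^2$, hence $f\in(p,\mu)^2$ if and only if $pg\in(p^2,p\mu,\mu^2)$, and it remains to show the latter is equivalent to $\varphi\mid\bar g$.

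For the easy direction, if $\varphi\mid\bar g$ then $g=\mu q+ps$ for suitable $q,s\in\Z[t]$ (lift a quotient of $\bar g$ by $\bar\mu$, then absorb the error into $p\Z[t]$), so $pg=p\mu q+p^2 s\in(p\mu,p^2)\subseteq(p,\mu)^2$. Conversely, suppose $pg=p^2 a+p\mu b+\mu^2 c$ with $a,b,c\in\Z[t]$. Reducing modulo $p$ gives $0=\bar\mu^2\bar c$ in the domain $\F_p[t]$, hence $\bar c=0$, i.e. $c=pc'$ with $c'\in\Z[t]$. Substituting and cancelling the common factor $p$ in the domain $\Z[t]$ yields $g=pa+\mu(b+\mu c')$, so $\varphi=\bar\mu$ divides $\bar g$. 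This proves the claim and hence the theorem.

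I do not anticipate a genuine obstacle: the whole argument is bookkeeping in $\Z[t]$, and the only points demanding a little care are the two places where integral-domain cancellation is used — passing from $\bar\mu^2\bar c=0$ to $\bar c=0$ in $\F_p[t]$, and cancelling $p$ from $pg-p^2a-p\mu b=\mu^2 c=p\mu^2 c'$ in $\Z[t]$ — together with the initial observation that $\varphi^2\mid\bar f$ forces $e\geq 2$. All the substantive content is already contained in Dedekind's criterion itself.
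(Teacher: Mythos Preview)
Your proof is correct. The paper does not actually give its own proof of this statement: it is quoted from \cite{JKS16}, and the paper only remarks that once the correspondences among the three kinds of witnesses are set up in Section~\ref{UGL}, ``the relation between the above result and ours also will become clear.'' The proof implicit in the paper's framework would run through $\Z[\theta]$: $\Z[\theta]$ fails to be $p$-maximal iff some $(p,i_0)$ is a Dedekind witness; by the paragraph ``From $(p,i_0)$ to $\Pcal$'' in Section~\ref{UGL}, the ideal $\Pcal=(p,\mu_{i_0}(\theta))$ is then a L\"uneburg witness with $\mu_\Pcal=\mu_{i_0}$; by the equivalence $1\Leftrightarrow 4$ of Theorem~\ref{Hilfsaatziffintro} this gives $f\in(p,\mu_{i_0}(t))^2$; and conversely the paragraph ``From $M$ to $\Pcal$'' together with ``From $\Pcal$ to $(p,i)$'' recovers a Dedekind witness from any $i$ with $f\in(p,\mu_i(t))^2$.

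Your route is genuinely different and more elementary: you never leave the polynomial rings $\Z[t]$ and $\F_p[t]$, and you never touch $\Z[\theta]$, its maximal ideals, or any localization. You match Dedekind's condition and the JKS condition index by index via a short ideal-membership computation, so the theorem becomes an immediate corollary of Theorem~\ref{ded1} alone. What the paper's route buys is conceptual unification --- the JKS statement drops out of the larger picture linking Dedekind, Uchida, and L\"uneburg --- whereas your route buys self-containment and brevity: no Lemma~\ref{Hilfsaatz0a}, no Proposition~\ref{UchLun}, just two cancellations in integral domains.
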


After we connect all the witnesses in Section \ref{UGL}, the relation between the above result and ours also will become clear.

In Section \ref{torre}, we will apply Theorem \ref{Hilfsaatziffintro} to the following towers of nested square roots. Given $\nu\ge2$ an integer and $x_0=0$, write $x_{n+1}=\sqrt{x_n+\nu}$. Write $\Z^{(\nu)}=\cup_n\Z[x_n]$, and $K^{\nu}=\cup_n\Q(x_n)$. The initial motivation for considering these towers was a problem in Logic going back to Julia Robinson \cite{Rob59}. Given a totally real subfield $K$ of an algebraic closure $\tilde\Q$ of $\Q$ and $t\in\R\cup\{+\infty\}$, she defines 
$$
\Ocal_K^{<<t}=\{\theta\in \Ocal_K\colon 0<<\theta<<t\}
$$
and the $\JR$ number 
$$
\JR(\Ocal_K)=\inf\{t\in\R\cup\{+\infty\}\colon \Ocal_K^{<<t}\textrm{ is infinite}\},
$$ 
where $0<<\theta<<t$ stands for ``$\theta$ and all its conjugates are in the real interval from $0$ to $t$'', and she proves that if $\JR(\Ocal_K)$ is a minimum or $+\infty$, then the semi-ring of natural numbers is first-order definable in $\Ocal_K$ (hence its theory is undecidable). Julia Robinson asked whether the $\JR$ number is always a minimum. In \cite{VyV}, we compute explicitly the $\JR$ number of $\Z^{(\nu)}$ and prove that it is a minimum for infinitely many values of $\nu$, whereas it is not a minimum for infinitely many values of $\nu$, though in the latter case it has another topological property that ensures the definability of $\N$. In all cases the $\JR$ number lies strictly between $4$ and $+\infty$. We asked there for examples of $\Ocal_K$ with $\JR$ number strictly between $4$ and $+\infty$, which led us to the question whether any of these $\Z^{(\nu)}$ is integrally closed. 
P. Gillibert and G. Ranieri \cite{GiRan17} constructed infinitely many $\Ocal_K$ for which the $\JR$ number is strictly between $4$ and $+\infty$. In all cases it turns out to be a minimum, leaving wide open the question of Julia Robinson. In the mean time, with M. Castillo \cite{CVV18}, we constructed infinitely many $\Ocal_K$ for which either the $\JR$ is $4$ and it is not a minimum, or it is neither $4$ nor $+\infty$.

Let $P_n$ be the minimal polynomial of $x_n$ and $C_n$ its constant term. In her thesis \cite{Cas18}, M. Castillo proves, using Uchida's theorem, that for infinitely many $\nu$, $\Z^{(\nu)}$ is integrally closed if and only if $C_n$ is square-free for every $n$. With the same notation as above, we obtain the following local version of her theorem. 

\begin{theorem}\label{p2intro}
Let $p$ be a prime number which divides some $C_n$, and $n(p)$ be the smallest such index $n$. 
\begin{enumerate}
\item If $p^2$ does not divide $C_{n(p)}$, then for any $\Pcal$ maximal over $p\Z$ and for any $n$, the localization of $\Z[x_n]$ at $\Pcal$ is a discrete valuation ring. 
\item If for any $\Pcal$ maximal over $p\Z$ the localization of $\Z[x_{n(p)}]$ at $\Pcal$ is a discrete valuation ring then $p^2$ does not divide $C_{n(p)}$.
\end{enumerate}
\end{theorem}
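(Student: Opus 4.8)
The plan is to reduce both parts of Theorem~\ref{p2intro} to repeated applications of Theorem~\ref{Hilfsaatziffintro}, using the concrete recursion $P_{n+1}(t) = P_n(t^2 - \nu)$ (up to normalization, since $x_{n+1}^2 - \nu = x_n$) that governs the minimal polynomials $P_n$ of $x_n$. First I would record the basic structure of the tower: $P_0(t) = t$, $P_1(t) = t^2 - \nu$, and in general $P_{n+1}(t) = P_n(t^2-\nu)$, so that $C_{n+1} = P_n(-\nu) = C_n \cdot(\text{something})$; more precisely I would track how $C_n = P_{n-1}(-\nu)$ relates to $C_{n-1}$, since the divisibility of $p$ by successive constant terms is exactly what $n(p)$ measures. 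The key observation is that for a prime $p$ dividing $C_n$, we have $\bar P_n(0) = 0$ in $\F_p$, so $t \mid \bar P_n$; combined with the recursion, one sees $(t^2-\nu)$-type factors propagate, and the relevant maximal ideal $\Pcal$ over $p\Z$ in $\Z[x_n]$ has $\bar\mu_\Pcal$ equal to an irreducible factor of $\bar P_n$ through which the ramification happens.

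**Part (1): $p^2 \nmid C_{n(p)}$ forces DVR at every level.** Write $m = n(p)$. Since $p \mid C_m$ but $p \nmid C_j$ for $j < m$, and $p^2 \nmid C_m$, I would first handle level $n \le m$: for $n < m$, $p \nmid C_n$ means $p$ is unramified in an elementary way (one checks $\bar P_n$ is separable, or at least that the witness condition $P_n \notin (p,\mu_\Pcal)^2$ holds because the square-free part is large enough). For $n = m$, the hypothesis $p \| C_m$ says exactly that the constant term of $P_m$ is divisible by $p$ exactly once; writing $P_m = \mu_\Pcal h + p g$ with $\mu_\Pcal = t$ (the generating minimal polynomial at the ramified ideal, after a shift if $\Pcal$ sits over a nonzero residue), the constant term of $pg$ is $C_m$, so $\bar g(0) \neq 0$, whence $\gcd(\bar\mu_\Pcal, \bar g) = 1$ and condition~3 of Theorem~\ref{Hilfsaatziffintro} holds. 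Then for $n > m$, I would use the recursion $P_n(t) = P_m(Q(t))$ where $Q$ is the appropriate iterate of $t \mapsto t^2 - \nu$: a maximal ideal $\Pcal'$ of $\Z[x_n]$ over $p$ determines via $x_n \mapsto Q(x_n) = x_m$ a maximal ideal $\Pcal$ of $\Z[x_m]$, and I would show that the witness $f \notin (p,\mu_\Pcal)^2$ at level $m$ pulls back to a witness $f \notin (p,\mu_{\Pcal'})^2$ at level $n$, using that $Q'(t)$ is not divisible by $p$ at the relevant residue (this is where $\nu \ge 2$ and the "infinitely many $\nu$" hypothesis of Castillo enters — one needs the derivative of the iterate to be a unit mod $\Pcal'$, i.e. the extension $\Q(x_n)/\Q(x_m)$ to be unramified at $\Pcal$).

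**Part (2) and the main obstacle.** Part~(2) is the contrapositive-friendly direction: assuming $p^2 \mid C_{n(p)}$, I would exhibit a maximal ideal $\Pcal$ over $p$ in $\Z[x_{n(p)}]$ at which localization fails to be a DVR, by checking condition~4 of Theorem~\ref{Hilfsaatziffintro} fails, i.e. $P_{n(p)} \in (p, \mu_\Pcal(t))^2$. Taking $\Pcal$ to be a prime with $\bar\mu_\Pcal = t$ (which exists since $t \mid \bar P_{n(p)}$), the condition $P_{n(p)} \in (p,t)^2 = (p^2, pt, t^2)$ amounts to: the constant term of $P_{n(p)}$ lies in $p^2\Z$ and the linear coefficient lies in $p\Z$. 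The constant term condition is exactly $p^2 \mid C_{n(p)}$, given; the linear-coefficient condition I would need to verify from the recursion — here $P_{n(p)}(t) = P_{n(p)-1}(t^2-\nu)$ has \emph{no} linear term at all when $n(p) \ge 2$ (it is a polynomial in $t^2-\nu$, hence in $t^2$ plus constants... careful: $t^2 - \nu$ is not a polynomial in $t^2$ — but $P_{n(p)-1}(t^2-\nu)$ \emph{is} a polynomial in $t^2$, so its odd-degree coefficients all vanish), so the linear coefficient is $0 \in p\Z$ automatically. The main obstacle I anticipate is \textbf{bookkeeping which irreducible factor of $\bar P_n$ the ramified $\Pcal$ corresponds to}: the naive choice $\bar\mu_\Pcal = t$ only works when $p \mid C_n$ via the constant term vanishing, but for levels $n > n(p)$ one must chase the factor through the substitution $t \mapsto t^2 - \nu$ and argue it stays irreducible (or splits into the right shape) while controlling the multiplicity, and simultaneously verify the unramifiedness of each step $\Q(x_{n+1})/\Q(x_n)$ at the primes in question. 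I would isolate this as a preliminary lemma about the factorization of $\bar P_n$ and the behavior of $\disc P_n$ at $p$, and then the two implications fall out by matching the witness data of Theorem~\ref{Hilfsaatziffintro} level by level.
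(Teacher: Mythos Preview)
Your treatment of Part~(2) is correct and matches the paper's argument: with $m=n(p)$, the ideal $\Pcal=(p,x_m)$ is maximal (since $t\mid \bar P_m$), $\bar\mu_\Pcal=t$, and because $P_m$ is a polynomial in $t^2$ one has $P_m=t^2A+C_m\in(t^2,p^2)\subseteq(p,t)^2$ whenever $p^2\mid C_m$; the paper phrases this as the direct implication via Corollary~\ref{Hilfsaatziff}, but it is the same computation. Your handling of Part~(1) at levels $n\le m$ (discriminant/separability for $n<m$, and the constant-term argument at $n=m$) is likewise on the right track and agrees with the paper.

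The genuine gap is in Part~(1) for $n>m$. Your plan is to pull back the witness along $P_n=P_m\circ Q$ with $Q=P_{n-m}$, using that $Q'(x_n)$ is a unit modulo $\Pcal'$. But $Q'=P_{n-m}'=2^{\,n-m}\prod_{j=0}^{n-m-1}P_j$, so $Q'(x_n)=2^{\,n-m}x_n x_{n-1}\cdots x_{m+1}$; this lies in $\Pcal'$ whenever any $x_j$ with $m<j\le n$ does, in particular whenever $\bar\mu_{\Pcal'}$ divides $\bar P_{n-m}$ (e.g.\ $\bar\mu_{\Pcal'}=t$ at level $n=2m$). So the unramifiedness hypothesis you invoke is simply false for a whole class of primes $\Pcal'$, and no restriction on $\nu$ (Castillo's ``infinitely many $\nu$'' or otherwise) will rescue it: the theorem holds for every square-free $\nu\equiv 2,3\pmod 4$. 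The paper does not argue via unramifiedness of the tower at all. Instead it writes
\[
\bar P_n=\bar P_{n-m}^{\,2}\bigl(\bar P_{n-m}^{\,2}\bar R_m(\bar P_{n-m})+\bar D_m\bigr)
\]
and splits into two cases. If $\bar\mu_{\Pcal'}$ divides the second factor, a separability lemma (proved via the explicit derivative formula and a coprimality lemma for the $\bar P_k$ across residues mod $m$) shows $\bar\mu_{\Pcal'}$ lies in the square-free part of $\bar P_n$, so Remark~\ref{UchLunRem} applies. If instead $\bar\mu_{\Pcal'}\mid\bar P_{n-m}$, write $P_{n-m}=\mu_{\Pcal'}h+pg_0$ and expand $P_n=P_{n-m}^2[\cdots]+C_m$ to get $P_n=\mu_{\Pcal'}(\cdots)+p\bigl(p g_0^2(\cdots)+C_m/p\bigr)$; the hypothesis $p^2\nmid C_m$ is exactly what makes the reduction $\bar g=\overline{C_m/p}$ a nonzero constant, and Theorem~\ref{Hilfsaatzif} concludes. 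Your proposal never reaches this second case, which is where the hypothesis $p^2\nmid C_{n(p)}$ actually does its work for $n>m$.
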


Note that item 1 says in particular that the information at the level $n(p)$ is enough to know what happens at every level of the tower with respect to the chosen prime $p$. 

Before we can apply Theorem \ref{Hilfsaatziffintro} to prove Theorem \ref{p2intro}, we need first a few results on the dynamic of the sequences of $C_n$ and $P_n$ --- for instance, we will see that the sequence $(C_n)$ is a super-rigid divisibility sequence, meaning that if a prime $p$ divides some $C_n$ with order $v$, then it divides $C_{n+kp}$ for every $k$ with the same order.



\section{A local version of Uchida's Theorem and the reciprocal to L\"uneburg's Test}\label{secHilfsaatz}

Everything in this section up to Remark \ref{UchLunRem} is essentially due to L\"uneburg \cite{Lu84}. We include it for the sake of completeness, in order to fix ideas and notation, and because we were not able to find a reference in the english language. 

In all this section: 
\begin{itemize}
\item $\theta$ is an algebraic integer
\item $R=\Z[\theta]$
\item $f(x)$ is the minimal polynomial of $\theta$ over $\Z$
\item $n$ is the degree of $f$
\item $\Pcal$ is a maximal ideal of $R$
\item $p$ is the prime in $\Z$ below $\Pcal$, i.e. such that $p\Z=\Pcal\cap\Z$
\item $L=R_\Pcal$ is the localization of $R$ at $\Pcal$
\item $\mu=\mu_\Pcal$ is a monic polynomial over $\Z$ of least degree such that $\mu(\theta)\in\Pcal$. 
\item $I=pR+\mu(\theta)R$ (we will prove that $I=\Pcal$)
\item $M=p\Z[t]+\mu\Z[t]$
\item $Q=pL+\mu(\theta)L$. 
\end{itemize}

Write $\pi_1\colon \Z\rightarrow\F_p$ and $\pi_2\colon\Z[\theta]\rightarrow\Z[\theta]/\Pcal$ for the canonical projections and $i_1\colon\Z\rightarrow\Z[\theta]$ for the inclusion map, which induces $i_2\colon\F_p\rightarrow\Z[\theta]/\Pcal$ such that $\pi_2\circ i_1=i_2\circ\pi_1$: 
$$
\begin{tikzcd}
\Z[\theta] \arrow[r, twoheadrightarrow, "\pi_2"] 
& \Z[\theta]/\Pcal \\
\Z \arrow[u, hook, "i_1"] \arrow[r, twoheadrightarrow, "\pi_1"]
&\F_p \arrow[u, hook, "i_2"']
\end{tikzcd}
$$
For $m\in\Z$, we may write $\pi_1(m)=m+p\Z=\bar m$, and for $\alpha\in\Z[\theta]$ we may write $\pi_2(\alpha)=\alpha+\Pcal=\tilde \alpha$. We may evaluate polynomials over $\Z$ at elements of $\Z[\theta]$, and polynomials over $\F_p$ at elements of $\Z[\theta]/\Pcal$. Also, for $\mu\in\Z[t]$, we may write $\bar\mu\in\F_p[t]$ for the reduced polynomial modulo $p$. 

\begin{lemma}\label{lemgenmin}
The minimal polynomial of $\pi_2(\theta)$ is $\bar\mu$. 
\end{lemma}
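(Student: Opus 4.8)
The plan is to show that $\bar\mu \in \F_p[t]$ both kills $\pi_2(\theta)$ and has the minimal possible degree among monic polynomials over $\F_p$ that do so. For the first part: since $\mu(\theta) \in \Pcal$ by definition of $\mu = \mu_\Pcal$, applying $\pi_2$ and using that $\pi_2$ is a ring homomorphism fixing $\Z$-coefficients (compatibly with $\pi_1$ via the commutative square), I get $\bar\mu(\pi_2(\theta)) = \pi_2(\mu(\theta)) = 0$ in $\Z[\theta]/\Pcal$. So $\pi_2(\theta)$ is a root of $\bar\mu$, and $\bar\mu$ is monic of degree $\deg\mu =: m$. It remains to see that no monic polynomial of smaller degree over $\F_p$ annihilates $\pi_2(\theta)$.

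For the minimality, suppose $\bar\nu \in \F_p[t]$ is monic of degree $d < m$ with $\bar\nu(\pi_2(\theta)) = 0$. Lift $\bar\nu$ to a monic $\nu \in \Z[t]$ of the same degree $d$ (take any lift of the lower coefficients and keep the leading coefficient $1$). Then $\pi_2(\nu(\theta)) = \bar\nu(\pi_2(\theta)) = 0$, i.e. $\nu(\theta) \in \Pcal$. But $\nu$ is monic of degree $d < m = \deg\mu_\Pcal$, contradicting the definition of $\mu_\Pcal$ as a monic polynomial of \emph{least} degree with $\mu_\Pcal(\theta) \in \Pcal$. Hence $m$ is the least degree, and $\bar\mu$, being monic of that degree and annihilating $\pi_2(\theta)$, is the minimal polynomial of $\pi_2(\theta)$ over $\F_p$.

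The only subtlety — and the one step worth checking carefully rather than the routine verifications — is the lifting argument: one must produce a \emph{monic} integer lift of $\bar\nu$ of exactly degree $d$, so that the contradiction with the minimality of $\deg\mu_\Pcal$ is legitimate; if one only had a lift of possibly larger degree or with leading coefficient divisible by $p$, the argument would not close. Since $\bar\nu$ is monic of degree $d$, lifting each coefficient to $\Z$ (and the leading one to $1$) does exactly this, so there is no real obstacle. I would also remark in passing that this lemma shows $\bar\mu_\Pcal$ is irreducible over $\F_p$ (being a minimal polynomial of an element of the field $\Z[\theta]/\Pcal$) and divides $\bar f$ (since $f(\theta) = 0$ gives $\bar f(\pi_2(\theta)) = 0$), which is the remark promised after Theorem \ref{Hilfsaatzif}.
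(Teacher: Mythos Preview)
Your proof is correct and follows essentially the same approach as the paper: show $\bar\mu$ annihilates $\pi_2(\theta)$, then lift any monic annihilator over $\F_p$ to a monic integer polynomial of the same degree and invoke the minimality of $\deg\mu_\Pcal$. The paper phrases the second step via the minimal polynomial $\varphi_0$ of $\pi_2(\theta)$ and a degree comparison rather than by contradiction, but the content is identical.
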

\begin{proof}
Let $\varphi_0\in\F_p[t]$ be the minimal polynomial of $\pi_2(\theta)$. Since $\mu(\theta)\in\Pcal$, we have $\bar\mu(\pi_2(\theta))=\pi_2(\mu(\theta))=0+\Pcal$, so $\varphi_0$ divides $\bar\mu$ in $\F_p[t]$. Let $\mu_0\in\Z[t]$ be a lift of $\varphi_0$. We have 
$$
\deg(\mu_0)=\deg(\varphi_0)\le\deg(\bar\mu)=\deg(\mu)
$$ 
because $\varphi_0$ and $\mu$ are monic by hypothesis. Since $\pi_2(\mu_0(\theta))=\varphi_0(\pi_2(\theta))=0+\Pcal$, we have $\mu_0(\theta)\in\Pcal$. Since $\mu_0$ is monic, we deduce $\deg(\mu)\le\deg(\mu_0)$ (from the minimality property for $\mu$). Therefore, we have 
$$
\deg(\bar\mu)=\deg(\mu)\le\deg(\mu_0)=\deg(\varphi_0),
$$ 
hence $\deg(\varphi_0)=\deg(\bar\mu)$, so $\varphi_0=\bar\mu$. 
\end{proof}

We know by Lemma \ref{lemgenmin} that $\bar\mu$ is one of the irreducible factors of $\bar f$, but which factor it is will depend in general on the choice of $\Pcal$ above $p$. 

\begin{lemma}\label{lemmuirr}
The polynomial $\mu$ is irreducible in $\Q[t]$. 
\end{lemma}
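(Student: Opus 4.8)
The plan is to argue by contradiction, exploiting the minimality that defines $\mu$ together with the fact (Lemma \ref{lemgenmin}) that $\bar\mu$ is the minimal polynomial of $\pi_2(\theta)$ over $\F_p$ and is therefore irreducible in $\F_p[t]$. First I would note that $\mu$ is monic of positive degree (it cannot be constant, since $\mu(\theta)\in\Pcal$ and $\Pcal$ is a proper ideal, so a nonzero constant polynomial is excluded, and the zero polynomial is not monic). Suppose for contradiction that $\mu = \mu' \mu''$ in $\Q[t]$ with both factors of positive degree; since $\mu$ is monic with integer coefficients, by Gauss's lemma we may take $\mu', \mu'' \in \Z[t]$, both monic, with $\deg\mu' , \deg\mu'' < \deg\mu$.

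Now reduce modulo $p$: $\bar\mu = \bar{\mu'}\,\bar{\mu''}$ in $\F_p[t]$, where $\bar{\mu'}$ and $\bar{\mu''}$ are monic of the same (positive) degrees as $\mu'$ and $\mu''$. Since $\bar\mu$ is irreducible in $\F_p[t]$ by Lemma \ref{lemgenmin}, this is already a contradiction: an irreducible polynomial cannot be written as a product of two monic polynomials of strictly smaller positive degree. Hence no such factorization of $\mu$ exists, and $\mu$ is irreducible in $\Q[t]$.

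I should double-check the one subtle point: that the factors of $\mu$ in $\Q[t]$ can indeed be chosen monic in $\Z[t]$ with degrees adding up correctly. This is exactly Gauss's lemma applied to the monic integer polynomial $\mu$: any factorization in $\Q[t]$ into two factors can be rescaled so that both factors are primitive in $\Z[t]$, and since the product is monic the leading coefficients of the two $\Z[t]$-factors multiply to $1$, so (up to sign, which we absorb) both are monic. Degrees are preserved under reduction mod $p$ precisely because the factors are monic. So the only real ingredient beyond Gauss's lemma is Lemma \ref{lemgenmin}, and there is no genuine obstacle here — the argument is short. (Alternatively, one could observe directly that if $\mu = \mu'\mu''$ nontrivially then $\mu'(\theta)\mu''(\theta)\in\Pcal$, so since $\Pcal$ is prime one of $\mu'(\theta),\mu''(\theta)$ lies in $\Pcal$, contradicting the minimality of $\deg\mu$; this avoids Lemma \ref{lemgenmin} entirely and is perhaps the cleanest route.)
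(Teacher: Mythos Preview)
Your proposal is correct. The paper's proof is precisely the alternative route you sketch in your final parenthetical: write $\mu=rs$ with $r,s\in\Z[t]$ (using Gauss's lemma at the end rather than the start), use that $\Pcal$ is prime to get $r(\theta)\in\Pcal$, and then minimality of $\deg\mu$ forces $\deg r=\deg\mu$, hence $s=\pm1$. Your main argument, by contrast, goes through Lemma~\ref{lemgenmin}: you reduce a putative nontrivial factorization mod $p$ and contradict the irreducibility of $\bar\mu$. Both are short and valid; the paper's route is self-contained (it needs only the definition of $\mu$ and primality of $\Pcal$), while your main route imports the slightly heavier fact that $\bar\mu$ is irreducible. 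Since you already identified the direct argument as ``perhaps the cleanest route,'' you and the paper are in agreement.
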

\begin{proof}
If $\mu=rs$, with $r,s\in\Z[t]$, then $r(\theta)s(\theta)\in\Pcal$, which is a prime ideal of $R$. Without loss of generality, assume that $r(\theta)\in\Pcal$. Because of the minimality condition on $\mu$, the degree of $r$ is at least the degree of $\mu$, hence $r$ and $\mu$ have the same degree and $s=\pm 1$. We conclude by Gauss' Lemma. 
\end{proof}

Next lemma will be used in this section only with $F=\mu$, but we present it in this slightly more general form as we will need it in the next section. 

\begin{lemma}\label{Hilfsaatz0a}
For any $F\in\Z[t]$ such that $\bar F$ is irreducible and $\bar F$ divides $\bar f$, the ideal $pR+F(\theta)R$ is prime. 
\end{lemma}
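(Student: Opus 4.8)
The plan is to identify the quotient $R/(pR+F(\theta)R)$ explicitly and show it is a field (equivalently, an integral domain, since we will see it is finite). First I would write $R=\Z[\theta]\cong\Z[t]/(f(t))$ via the isomorphism sending $t\mapsto\theta$, which is legitimate because $f$ is the minimal polynomial of $\theta$ over $\Z$ and $\theta$ is an algebraic integer. Under this isomorphism the ideal $pR+F(\theta)R$ corresponds to the ideal of $\Z[t]/(f)$ generated by the images of $p$ and $F(t)$, so that
\[
R/(pR+F(\theta)R)\;\cong\;\Z[t]\big/(p,\,F(t),\,f(t))\;\cong\;\F_p[t]\big/(\bar F,\,\bar f).
\]
Now the key point is that $\bar F$ divides $\bar f$ in $\F_p[t]$ by hypothesis, so the ideal $(\bar F,\bar f)$ in $\F_p[t]$ is just $(\bar F)$. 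Hence
\[
R/(pR+F(\theta)R)\;\cong\;\F_p[t]/(\bar F),
\]
and since $\bar F$ is irreducible in $\F_p[t]$ by hypothesis, this quotient is a field. Therefore $pR+F(\theta)R$ is a maximal ideal of $R$, in particular prime.

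The only step requiring a little care is the first identification $\Z[\theta]\cong\Z[t]/(f)$ and the bookkeeping of ideals under the two successive quotients; both are routine, using that for a ring surjection the preimage/image correspondence of ideals containing the kernel behaves well, and that quotienting by $(p)$ first turns $\Z[t]$ into $\F_p[t]$ and carries $f,F$ to $\bar f,\bar F$. I expect the main (very mild) obstacle to be purely notational: keeping straight that $(p,F,f)$ in $\Z[t]$ collapses to $(\bar F)$ in $\F_p[t]$ precisely because $\bar F\mid\bar f$, which is exactly where the two hypotheses on $F$ are used — irreducibility of $\bar F$ gives the field property, and $\bar F\mid\bar f$ lets us discard $\bar f$ from the generating set. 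No deeper input (not even that $\theta$ generates a Dedekind ring) is needed.

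Remark that this lemma applied with $F=\mu=\mu_\Pcal$ will, together with the forthcoming arguments, show $I=pR+\mu(\theta)R$ equals $\Pcal$: indeed $I$ is a prime ideal contained in $\Pcal$ (as $p,\mu(\theta)\in\Pcal$) and both lie over $p\Z$, and in a one-dimensional setting containment of primes lying over the same prime of $\Z$ forces equality — but that refinement belongs to the subsequent development, not to the proof of the lemma itself.
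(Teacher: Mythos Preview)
Your proof is correct and is in fact cleaner than the paper's. The paper proceeds by what it calls ``Dedekind's argument'': it takes $a(\theta)b(\theta)\in J=pR+F(\theta)R$, lifts the relation to $\Z[t]$ using that $f$ generates the kernel of evaluation at $\theta$, substitutes $f=Fh+pg$ to rewrite $ab$ as $pW_1^*+FW_2^*$, and then uses irreducibility of $\bar F$ to conclude $\bar F\mid\bar a$ or $\bar F\mid\bar b$, whence $a(\theta)\in J$ or $b(\theta)\in J$. Your route instead identifies the quotient $R/J$ directly with $\F_p[t]/(\bar F)$ via the chain of isomorphisms $\Z[\theta]\cong\Z[t]/(f)$ and $\Z[t]/(p,F,f)\cong\F_p[t]/(\bar F,\bar f)=\F_p[t]/(\bar F)$, and reads off that this is a field.

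The two arguments use exactly the same hypotheses at the same places (irreducibility of $\bar F$ for primality/field-ness, $\bar F\mid\bar f$ to absorb $f$), but yours packages them structurally rather than elementwise. What your approach buys is brevity and the slightly stronger conclusion of maximality for free; what the paper's hands-on argument buys is a self-contained computation that does not invoke the isomorphism $\Z[\theta]\cong\Z[t]/(f)$ or the ideal-correspondence bookkeeping, staying closer to Dedekind's original style. Either is perfectly acceptable here.
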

\begin{proof}
Since $\bar F$ divides $\bar f$, there exist $h,g\in\Z[t]$ such that $f=F h+pg$. 

Write $J=pR+F(\theta)R$. We use Dedekind's argument. Let $a(t)=\sum_{i=0}^{n-1}a_it^i$ and $b(t)=\sum_{i=0}^{n-1}b_it^i$ be polynomials in $\Z[t]$ such that $a(\theta)b(\theta)\in J$. So there exist $W_1,W_2\in\Z[t]$ of degree at most $n-1$ such that 
$$
pW_1(\theta)+F(\theta)W_2(\theta)=a(\theta)b(\theta).
$$ 
Since $ab-pW_1-F W_2$ evaluated in $\theta$ is $0$ and $f$ is the minimal polynomial of $\theta$, there exists $W_3\in\Z[t]$ such that $ab=pW_1+F W_2+fW_3$. Since $f=F h+pg$, we can write $ab=pW_1^*+F W_2^*$ for some $W_1^*,W_2^*\in\Z[t]$. So $\bar F$ divides $\bar a\bar b$ in $\F_p[t]$, and since it is irreducible, it divides either $\bar a$ or $\bar b$. Without loss of generality, suppose that it divides $\bar a$. We have then $a(t)=F(t)W_4(t)+pW_5$ for some $W_4,W_5\in\Z[t]$, so $a(\theta)\in pR+F(\theta)R=J$. 
\end{proof}

\begin{lemma}[\cite{Lu84}, proof of Hilfssatz 4]\label{Hilfsaatz0}
We have $pR+\mu(\theta)R=\Pcal$. 
\end{lemma}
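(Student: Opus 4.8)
The plan is to show the two inclusions $pR+\mu(\theta)R\subseteq\Pcal$ and $\Pcal\subseteq pR+\mu(\theta)R$. The first inclusion is immediate: $p\in\Pcal$ since $p\Z=\Pcal\cap\Z$, and $\mu(\theta)\in\Pcal$ by the very definition of $\mu=\mu_\Pcal$, so the ideal $I=pR+\mu(\theta)R$ they generate is contained in $\Pcal$.

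For the reverse inclusion, the key observation is that $I$ is already a maximal ideal, so a proper ideal containing it must equal it. By Lemma \ref{lemgenmin}, $\bar\mu$ is the minimal polynomial of $\pi_2(\theta)$ over $\F_p$; in particular $\bar\mu$ is irreducible and divides $\bar f$, so Lemma \ref{Hilfsaatz0a} (applied with $F=\mu$) tells us that $I=pR+\mu(\theta)R$ is a prime ideal of $R$. To upgrade ``prime'' to ``maximal'', I would identify the quotient $R/I$. Since $R=\Z[\theta]\cong\Z[t]/(f)$, we have $R/I\cong\Z[t]/(f,\mu,p)\cong\F_p[t]/(\bar f,\bar\mu)=\F_p[t]/(\bar\mu)$, where the last equality uses that $\bar\mu\mid\bar f$. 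As $\bar\mu$ is irreducible over $\F_p$, the ring $\F_p[t]/(\bar\mu)$ is a field, so $I$ is maximal. Finally, $I\subseteq\Pcal$ with $I$ maximal and $\Pcal$ proper forces $I=\Pcal$, which is the claim.

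Alternatively, and perhaps closer to L\"uneburg's own argument as hinted by the lemma attribution, one can avoid the explicit quotient computation: knowing $I$ is prime from Lemma \ref{Hilfsaatz0a}, it suffices to check $\Pcal\subseteq I$ directly. Given $\alpha\in\Pcal$, write $\alpha=a(\theta)$ for some $a\in\Z[t]$ of degree $<n$; reducing modulo $\Pcal$ and using Lemma \ref{lemgenmin}, the polynomial $\bar a$ vanishes at $\pi_2(\theta)$, hence $\bar\mu\mid\bar a$ in $\F_p[t]$, so $a=\mu W_4+pW_5$ for some $W_4,W_5\in\Z[t]$, whence $\alpha=a(\theta)\in\mu(\theta)R+pR=I$. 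This gives $\Pcal\subseteq I$ and combined with the first paragraph yields equality.

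I expect the only real subtlety to be making sure the chain of quotient isomorphisms $R/I\cong\F_p[t]/(\bar\mu)$ is justified cleanly — in particular that $(f,p)$ in $\Z[t]$ pulls back correctly and that $(\bar f,\bar\mu)=(\bar\mu)$ uses $\bar\mu\mid\bar f$ from Lemma \ref{lemgenmin} — but this is routine commutative algebra. The second approach sidesteps even that, relying only on the division property of minimal polynomials over the residue field, so I would likely present that one. Either way, the genuine content has already been extracted into Lemmas \ref{lemgenmin} and \ref{Hilfsaatz0a}, and this lemma is essentially their combination.
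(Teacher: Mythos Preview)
Your proposal is correct and follows essentially the same line as the paper: show $I\subseteq\Pcal$, invoke Lemma~\ref{Hilfsaatz0a} (with $\bar\mu$ irreducible and dividing $\bar f$ via Lemma~\ref{lemgenmin}) to get $I$ prime, then conclude $I=\Pcal$ by maximality. The only cosmetic difference is that the paper upgrades ``prime'' to ``maximal'' by citing the general fact that nonzero primes in an integral extension of $\Z$ are maximal (\cite[Cor.~5.8]{AtMac69}), rather than computing $R/I\cong\F_p[t]/(\bar\mu)$ or checking $\Pcal\subseteq I$ directly as in your alternative; your second route in fact bypasses Lemma~\ref{Hilfsaatz0a} altogether, which is a mild simplification.
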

\begin{proof}
Clearly we have $I=pR+\mu(\theta)R\subseteq\Pcal$, so it is enough to show that $I$ is prime, because it is known that prime ideals in an integral extension of $\Z$ are maximal (see for instance \cite[Cor. 5.8, p. 61]{AtMac69}). We conclude by Lemma \ref{Hilfsaatz0a}, because we know from Lemma \ref{lemmuirr} that $\bar\mu$ is irreducible, and from Lemma \ref{lemgenmin} that it divides $\bar f$. 
\end{proof}

We can now prove Theorem \ref{Hilfsaatzif}, that we state now in the context of this section. 

\begin{theorem}[\cite{Lu84}, Hilfssatz 4]\label{Hilfsaatz}
If there exists $h,g\in\Z[t]$ such that $f=\mu h+pg$ and $\gcd(\bar \mu,\bar g,\bar h)=1$ in $\F_p[t]$ then $L$ is a discrete valuation ring.
\end{theorem}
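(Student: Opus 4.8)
The plan is to prove that the maximal ideal $\Pcal L$ of the local ring $L=R_\Pcal$ is \emph{principal and nonzero}, and then invoke the standard characterization of discrete valuation rings. Note first that $L$ is a Noetherian local domain: $R=\Z[\theta]$ is a finitely generated $\Z$-algebra, hence Noetherian by Hilbert's basis theorem, and $L$ is a localization of $R$. The generator of $\Pcal L$ will turn out to be either $p$ or $\mu(\theta)$, according to the precise shape taken by the coprimality hypothesis.

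The starting point is the algebraic identity obtained from $f=\mu h+pg$: evaluating at $\theta$ and using $f(\theta)=0$ gives $\mu(\theta)h(\theta)=-p\,g(\theta)$ in $R$, hence also in $L$. By Lemma \ref{Hilfsaatz0} we have $\Pcal=pR+\mu(\theta)R$, so $\Pcal L=pL+\mu(\theta)L$. The key elementary observation, which I would isolate as a short remark, is that for any $F\in\Z[t]$ one has $F(\theta)\in\Pcal$ if and only if $\bar\mu$ divides $\bar F$ in $\F_p[t]$: indeed $\pi_2(F(\theta))=\bar F(\pi_2(\theta))$ in the field $R/\Pcal$, which vanishes exactly when $\bar F$ is a multiple of the minimal polynomial of $\pi_2(\theta)$, and that minimal polynomial is $\bar\mu$ by Lemma \ref{lemgenmin}; in particular $\bar\mu$ is irreducible. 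Since $\bar\mu$ is irreducible, the hypothesis $\gcd(\bar\mu,\bar g,\bar h)=1$ means $\bar\mu\nmid\bar h$ or $\bar\mu\nmid\bar g$. In the first case $h(\theta)\notin\Pcal$, so $h(\theta)$ is a unit of $L$, and the identity above gives $\mu(\theta)=-p\,g(\theta)h(\theta)^{-1}\in pL$, whence $\Pcal L=pL+\mu(\theta)L=pL$. In the second case $g(\theta)\notin\Pcal$, so $g(\theta)$ is a unit of $L$, and the identity gives $p=-\mu(\theta)h(\theta)g(\theta)^{-1}\in\mu(\theta)L$, whence $\Pcal L=\mu(\theta)L$. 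Either way $\Pcal L$ is principal, and it is nonzero since it contains $p\neq0$.

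I would finish with the classical fact that a Noetherian local domain whose maximal ideal is principal and nonzero is a discrete valuation ring: writing $\pi$ for a generator of $\Pcal L$, the Krull intersection theorem gives $\bigcap_k\Pcal^kL=0$, so every nonzero element of $L$ is uniquely of the form $u\pi^k$ with $u$ a unit, and $u\pi^k\mapsto k$ extends to a discrete valuation on $\operatorname{Frac}(L)$ whose valuation ring is $L$. (One checks along the way that $\Pcal L$ is then the only nonzero prime of $L$, so $L$ indeed has Krull dimension $1$.)

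The only points needing a little care are the equivalence ``$F(\theta)\in\Pcal\iff\bar\mu\mid\bar F$'', which is what makes $h(\theta)$ resp. $g(\theta)$ invertible in $L$, and the appeal to the Krull intersection theorem at the end; neither is a genuine obstacle. The real content of the theorem is the passage from coprimality modulo $p$ to principality of $\Pcal L$, and this is carried out by the single relation $\mu(\theta)h(\theta)=-p\,g(\theta)$ together with the identification $\Pcal=(p,\mu(\theta))$ from Lemma \ref{Hilfsaatz0}.
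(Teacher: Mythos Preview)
Your proof is correct and follows essentially the same route as the paper: both use Lemma~\ref{Hilfsaatz0} to write $\Pcal L=pL+\mu(\theta)L$, derive the relation $\mu(\theta)h(\theta)=-p\,g(\theta)$ from $f(\theta)=0$, split into the two cases $\bar\mu\nmid\bar h$ and $\bar\mu\nmid\bar g$ to show that $\Pcal L$ is principal (generated by $p$ or $\mu(\theta)$ respectively), and conclude by a standard DVR characterization. The only cosmetic difference is in the last step: the paper first argues that $L$ has Krull dimension~$1$ (inherited from $R$) and then invokes \cite[Prop.~9.2]{AtMac69}, whereas you use the equivalent criterion ``Noetherian local domain with nonzero principal maximal ideal'' via the Krull intersection theorem; both are standard.
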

\begin{proof}
We know by Lemma \ref{Hilfsaatz0} that $I=\Pcal$. Write $Q=pL+\mu(\theta)L$ for the (unique) maximal ideal of $L$. 

We prove that $Q$ is principal. Since $\bar\mu$ is irreducible, the condition $\gcd(\bar \mu,\bar g,\bar h)=1$ is equivalent to having either $\gcd(\bar \mu,\bar g)=1$ or $\gcd(\bar \mu,\bar h)=1$. 

Assume first that $\gcd(\bar \mu,\bar h)=1$, so that $h(\theta)\notin\Pcal$. Let $y\in Q$ and write $y=(p\alpha+\mu(\theta)\beta)/\gamma$,  where $\alpha,\beta\in R$ and $\gamma\in R\setminus\Pcal$. Since $f(\theta)=0$, we have $\mu(\theta)h(\theta)=-pg(\theta)$, hence 
$$
y=p\frac{\alpha h(\theta)-g(\theta)\beta}{\gamma h(\theta)},
$$
and since $\Pcal$ is prime, $\gamma h(\theta)$ does not lie in $\Pcal$, so $y\in pL$. So in that case we have $Q=pL$. 

Assume now that $\gcd(\bar \mu,\bar g)=1$, so that $g(\theta)\notin\Pcal$. Hence 
$$
p=\frac{-\mu(\theta)h(\theta)}{g(\theta)}\in\mu(\theta)L,  
$$
so that $Q=\mu(\theta)L$. 

Since $R$ is a finite integral extension of $\Z$, it is a noetherian domain, and all its non-zero prime ideals are maximal --- by \cite[Cor. 5.8, p. 61]{AtMac69}. In particular, $R$ has Krull dimension $1$, hence also $L$. Since $L$ is noetherian, has Krull dimension $1$, and its maximal ideal is principal, we conclude that it is a discrete valuation ring --- see \cite[Prop. 9.2, p. 94]{AtMac69}. 
\end{proof}

\begin{remark}\label{UchLunRem}
There are some situations that trivially imply that the hypothesis of Theorem \ref{Hilfsaatz} is satisfied: 
\begin{enumerate}
\item When $p$ does not divide the discriminant of $\theta$: this is because in that case the reduced polynomial $\bar f$ is separable, so for whatever decomposition $f=\mu h+pg$, the reduced equation $\bar f=\bar\mu\bar h$ will give $\gcd(\bar\mu,\bar h)=1$. 
\item If $\bar\mu$ lies in the square-free part of $\bar f$, then trivially $\gcd(\bar\mu,\bar h)=1$. 
\item If $\bar g$ is a non-zero constant.  
\end{enumerate}
\end{remark}

\begin{lemma}\label{UchLunLem}
If $\bar f$ has multiplicity $\ge 2$ at $\bar\mu$ and $L$ is a discrete valuation ring, then $f\notin M^2$ and there exists $h,g\in\Z[t]$ such that $f=\mu h+pg$ and $\gcd(\bar\mu,\bar g)=1$. 
\end{lemma}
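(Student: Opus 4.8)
The plan is to argue by contradiction: assume that $\bar f$ has multiplicity $\geq 2$ at $\bar\mu$, that $L$ is a discrete valuation ring, and that the desired conclusion fails. First I would observe that the two assertions in the conclusion are closely linked. Indeed, suppose we have a decomposition $f=\mu h+pg$ with $\gcd(\bar\mu,\bar g)=1$; since $\bar\mu$ is irreducible and has multiplicity $\geq 2$ in $\bar f = \bar\mu\,\bar h$, it must divide $\bar h$, and then $\bar\mu^2$ divides $\bar\mu\,\bar h = \bar f$ while $\bar\mu \nmid \bar g$, which forces $f \in M^2$ to be violated (writing $f = \mu h + pg \in (\mu,p)^2$ would mean $\bar f \in (\bar\mu)^2$ after reducing mod $p$, but the $pg$-term controls whether the "cross" coefficient vanishes — more precisely $f \in M^2 = (p^2, p\mu, \mu^2)\Z[t]$ iff $p \mid g$ modulo $(\mu, p)$ considerations). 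So the real content is: (a) produce a decomposition with $\gcd(\bar\mu,\bar g)=1$, and (b) deduce $f\notin M^2$ from it. I expect (b) to be essentially bookkeeping once (a) is in hand, so the heart of the matter is (a).

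For step (a), the key is to use that $L=R_\Pcal$ is a DVR. By Lemma \ref{Hilfsaatz0} we know $\Pcal = pR + \mu(\theta)R$, so the maximal ideal of $L$ is $Q = pL + \mu(\theta)L$, and since $L$ is a DVR this ideal is principal, generated by a uniformizer $\pi$. Now $\bar f$ having multiplicity $\geq 2$ at $\bar\mu$ means that for the decomposition $f = \mu h + pg$ we have $\bar\mu \mid \bar h$ (since $\bar f = \bar\mu\bar h$ and $\bar\mu$ is irreducible of multiplicity $\geq 2$), i.e. $h(\theta) \in \Pcal$, so $h(\theta) \in QL$. The plan is to show that $g(\theta) \notin \Pcal$, equivalently $g(\theta)$ is a unit in $L$. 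Suppose not, i.e. $g(\theta) \in Q$; combined with $h(\theta) \in Q$ and the relation $\mu(\theta)h(\theta) = -pg(\theta)$, I would compute valuations: $v(p) = v(\mu(\theta)) + v(h(\theta)) \geq v(\mu(\theta)) + 1$, and also $v(p) = v(g(\theta)) + 1 \geq 2$. But $Q = pL + \mu(\theta)L$ is generated by the two elements $p$ and $\mu(\theta)$, so $v(Q) = \min(v(p), v(\mu(\theta))) = 1$; if $v(p) \geq 2$ this forces $v(\mu(\theta)) = 1$, and then $v(p) = v(\mu(\theta)) + v(h(\theta)) = 1 + v(h(\theta)) \geq 2$ is consistent, giving $v(h(\theta)) = v(p) - 1 \geq 1$. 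So I need a sharper argument — the point is that one can *choose* a good decomposition. If the particular $g$ arising from a particular lift $\mu$ and particular $h$ has $g(\theta) \in \Pcal$, I would modify: since $L$ is a DVR and $Q$ is principal, exactly one of $p$ or $\mu(\theta)$ is a uniformizer (up to units), and in either sub-case one can solve for a new relation — this mirrors exactly the two cases in the proof of Theorem \ref{Hilfsaatz}, run in reverse. Concretely, if $v(\mu(\theta)) = 1$ (so $\mu(\theta)$ is a uniformizer) then $p = -\mu(\theta)h(\theta)/g(\theta)$ exhibits $p \in \mu(\theta)L$, but we want to see this as a polynomial identity; and if $v(p) = 1$ then $p$ is a uniformizer, $v(\mu(\theta)) \geq 1$, and since $\bar\mu \mid \bar f$ with multiplicity $\geq 2$ one already has enough room. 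The cleanest route is: because $v(p) \geq 1$ and $v(\mu(\theta)) \geq 1$ and $\min = 1$, and because $\mu(\theta)h(\theta) + pg(\theta) = 0$, a uniformizer is whichever of $p, \mu(\theta)$ has valuation $1$; WLOG by replacing $\mu$ if necessary one arranges that $p$ is a uniformizer is impossible to arrange, so instead one shows $\mu(\theta)$ must be a uniformizer given the multiplicity hypothesis, whence $g(\theta)$ is forced to be a unit, i.e. $\gcd(\bar\mu, \bar g) = 1$.

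Let me restate what I expect the cleanest version to be, because the above is circuitous. Since $\bar f = \bar\mu\bar h$ with $\bar\mu$ irreducible and $\bar\mu^2 \mid \bar f$, write $\bar f = \bar\mu^{e}\bar w$ with $e \geq 2$ and $\bar\mu \nmid \bar w$. In $L$, the maximal ideal $Q$ is principal; I claim $\mu(\theta)$ is a uniformizer. If instead $p$ were a uniformizer and $v(\mu(\theta)) = m \geq 2$... actually $v(\mu(\theta))$ could be $1$ even if $p$ is also valuation $1$ only if — no, in a DVR two elements can't both have valuation $1$ unless they differ by a unit, which is fine. The decisive computation: from $\mu(\theta)h(\theta) = -pg(\theta)$, taking $\Pcal$-adic valuations, $v(\mu(\theta)) + v(h(\theta)) = v(p) + v(g(\theta))$. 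Now $v(h(\theta)) \geq e - 1 \geq 1$ is NOT automatic — rather, what's automatic is that $\bar h$ is divisible by $\bar\mu^{e-1}$, which gives $h(\theta) \in \Pcal$ but says nothing *a priori* about higher powers. So the honest approach: assume for contradiction $g(\theta) \in \Pcal$. Then modulo $\Pcal^2$... The hard part, and where I expect the main obstacle, is precisely showing $g(\theta) \notin \Pcal$, i.e. extracting from "$L$ is a DVR" plus "$\bar\mu^2 \mid \bar f$" the conclusion that the cross term $\bar g$ is coprime to $\bar\mu$. I would handle this by the valuation count: $v(p) = v(\mu(\theta)) + v(h(\theta))$; since $Q$ is principal generated by $\{p, \mu(\theta)\}$ we have $\min(v(p), v(\mu(\theta))) = 1$. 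If $v(\mu(\theta)) \geq 2$ then $v(p) = 1$, but then $v(p) = v(\mu(\theta)) + v(h(\theta)) \geq 2 + 1 = 3$, contradiction. Hence $v(\mu(\theta)) = 1$, so $\mu(\theta)$ is a uniformizer, so $p = -\mu(\theta)h(\theta)/g(\theta)$; since $v(p) = v(\mu(\theta)) + v(h(\theta)) - v(g(\theta)) = 1 + v(h(\theta)) - v(g(\theta)) \geq 1$ and $v(p) \geq 1$ always, we need $v(g(\theta)) \leq v(h(\theta))$. But $\bar h$ is divisible by $\bar\mu$ (multiplicity $\geq 2$) so $v(h(\theta)) \geq 1$; I want $v(g(\theta)) = 0$. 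Suppose $v(g(\theta)) \geq 1$; then reducing $f = \mu h + pg$ and using that $p = \mu(\theta)\cdot(\text{unit})\cdot(\ldots)$ — at this point I would pass to the polynomial statement: $g(\theta) \in \Pcal = (p, \mu(\theta))$ means $g = \mu a + p b + f c$ for polynomials $a,b,c$, so $\bar g = \bar\mu \bar a$ in $\F_p[t]/(\bar f)$... and then substituting back into $f = \mu h + pg$ one finds $f \in (p, \mu)^2 = M^2$ after accounting for degrees, which combined with what we'll prove (item 4 $\Leftrightarrow$ item 1 of Theorem \ref{Hilfsaatziffintro}, not yet available here — so I'd instead derive a direct contradiction with $L$ being a DVR by exhibiting that $Q$ then needs two generators). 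The obstacle is keeping the bookkeeping between the polynomial ring $\Z[t]$, the quotient $R = \Z[t]/(f)$, and the localization $L$ straight; once that's organized the valuation inequalities close the argument and give both $\gcd(\bar\mu,\bar g) = 1$ and, as noted at the outset, $f \notin M^2$.
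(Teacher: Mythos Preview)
Your overall architecture is right: show $\mu(\theta)$ is a uniformizer of $L$, deduce $\gcd(\bar\mu,\bar g)=1$, then derive $f\notin M^2$. The gap is in the first step, where your valuation bookkeeping does not close.

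From $\mu(\theta)h(\theta)=-pg(\theta)$ the correct identity is
\[
v(\mu(\theta))+v(h(\theta))=v(p)+v(g(\theta)),
\]
which you write down once, but in the decisive passage you use $v(p)=v(\mu(\theta))+v(h(\theta))$ instead, silently setting $v(g(\theta))=0$ --- exactly what you are trying to prove. With the correct identity the case $v(p)=1$, $v(\mu(\theta))\ge 2$ is \emph{not} excluded: nothing prevents $v(g(\theta))$ from being large, and the multiplicity hypothesis $\bar\mu^{\,2}\mid\bar f$ only gives $v(h(\theta))\ge 1$, not more. So the pure valuation count has too many degrees of freedom to force a contradiction.

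The paper resolves this by lifting back to $\Z[t]$ rather than arguing in $L$. If $p$ were a uniformizer, write $\mu(\theta)=p^{\ell}a(\theta)/b(\theta)$ with $b(\theta)\notin\Pcal$; since $f$ is the minimal polynomial, $p^{\ell}a-\mu b=fW$ for some $W\in\Z[t]$. Reducing modulo $p$ gives $-\bar\mu\,\bar b=\bar f\,\bar W$, and now the hypothesis $\bar\mu^{\,2}\mid\bar f$ forces $\bar\mu\mid\bar b$, hence $b(\theta)\in\Pcal$, a contradiction. This is the ``sharper argument'' you sense is needed; you gesture at passing to polynomial identities at the end of your sketch but never carry out this particular lift, and it is precisely here that the multiplicity hypothesis does real work. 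Once $v(\mu(\theta))=1$ is established, the paper again works at the level of $\Z[t]$ (writing $p=\mu(\theta)^k a(\theta)/b(\theta)$ and tracking the resulting polynomial relation) to get $\gcd(\bar\mu,\bar g)=1$, and then a short direct computation gives $f\notin M^2$; your claim that this last part is bookkeeping is fair.
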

\begin{proof}
Since $L$ is a discrete valuation ring, in particular $Q=yL$ is principal and we have 
$$
pL=y^k L, \quad\textrm{and}\quad \mu(\theta)L=y^\ell L
$$ 
for some integers $k,\ell\ge1$. 

If both $k$ and $\ell$ are greater than $1$, then for any $u,v\in L$, the order of $pu+\mu(\theta)v$ at $Q$ would be at least $2$, which is impossible. We now show that $k$ cannot be $1$. Assume it is, so that there exist polynomials $a,b\in \Z[t]$ such that $b(\theta)\in R\setminus \Pcal$ and 
$$
\mu(\theta)=\frac{a(\theta)}{b(\theta)}p^\ell.
$$
Since $f$ is the minimal polynomial of $\theta$, there exists $W\in\Z[t]$ such that $p^\ell a-\mu b=fW$, hence $-\bar\mu \bar b=\bar f \bar W=\bar\mu^2\bar hW$ for some $h$ (which exists by hypothesis), and we have $\bar b=-\bar\mu\bar h\bar W$, so $b(\theta)$ lies in $\Pcal$, a contradiction. 

We deduce that $\ell=1$, hence $Q=\mu(\theta)L$. We have 
$$
p=\frac{a(\theta)}{b(\theta)}\mu(\theta)^k
$$
for some polynomials $a,b\in\Z[t]$ such that $b(\theta)\notin\Pcal$. The polynomial $\mu^k a-pb\in\Z[t]$ takes the value $0$ at $\theta$, hence it is divisible by $f$. Write 
$$
fW_1=\mu^k a-pb,
$$
where $W_1\in\Z[t]$. On the other hand, since $\bar f$ has multiplicity $\ge 2$ at $\bar\mu$, there exists $W_2\in\Z[t]$ such that  
$$
f=\mu h+pg=\mu(\mu W_2)+pg. 
$$
We now prove that $\gcd(\bar\mu,\bar g)$ is $1$. We have
$$
fW_1=\mu^2W_2W_1+pgW_1=\mu^k a-pb\in\Z[t],
$$
hence 
$$
\mu^2(W_2W_1-\mu^{k-2}a)=p(gW_1-b),
$$ 
so $\mu$ divides $gW_1-b$, hence $gW_1-b=\mu W_3$ for some $W_3\in\Z[t]$. If $\bar\mu$ would divide $\bar g$, then we would have $g=\mu W_4+pW_5$ for some $W_4,W_5\in\Z[t]$, hence $(\mu W_4+pW_5)W_1-b=\mu W_3$, hence 
$$
pW_5(\theta)W_1(\theta)-b(\theta)\in\Pcal,
$$
and since $pW_5(\theta)W_1(\theta)\in\Pcal$, this would contradict the definition of $b(\theta)$. 

For the sake of contradiction, suppose that $f$ lies in $M^2$, so we can write 
$$
f=\mu^2 W_6+p\mu W_7+p^2W_8=\mu^2W_6+p(\mu W_7+pW_8)
$$ 
for some polynomials $W_6,W_7,W_8\in\Z[t]$. Therefore, we have $\bar f=\bar\mu^2\bar W_2=\bar\mu^2\bar W_6$, hence $W_6=W_2+pW_9$ for some $W_9\in\Z[t]$, and 
$$
f=\mu^2W_2+p(\mu(W_7-\mu W_9)+pW_8), 
$$
hence $\bar g=\bar\mu(\bar W_7-\bar\mu \bar W_9)$, which contradicts the fact that $\bar\mu$ and $\bar g$ are coprime. 
\end{proof}

\begin{lemma}\label{UchLunLem2}
If $f\notin M^2$, then for every $g,h\in\Z[t]$ such that $f=\mu h+pg$ we have $\gcd(\bar\mu,\bar h,\bar g)=1$. 
\end{lemma}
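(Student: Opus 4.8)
The plan is to prove the contrapositive: assuming there exist $g,h\in\Z[t]$ with $f=\mu h+pg$ and $\gcd(\bar\mu,\bar g,\bar h)\ne 1$, we must produce a representation witnessing $f\in M^2$. Since $\bar\mu$ is irreducible (Lemma \ref{lemmuirr} together with Lemma \ref{lemgenmin}), the hypothesis $\gcd(\bar\mu,\bar g,\bar h)\ne 1$ means $\bar\mu$ divides both $\bar g$ and $\bar h$. First I would lift these divisibilities: write $g=\mu g_1+pg_2$ and $h=\mu h_1+ph_2$ for suitable $g_1,g_2,h_1,h_2\in\Z[t]$.

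Next I would substitute these into $f=\mu h+pg$ and collect terms. We get
$$
f=\mu(\mu h_1+ph_2)+p(\mu g_1+pg_2)=\mu^2 h_1+p\mu(h_2+g_1)+p^2 g_2,
$$
and every term on the right lies in $M^2=(p,\mu)^2=(p^2,p\mu,\mu^2)$. Hence $f\in M^2$, which is exactly the negation of the conclusion. Tracing the logic back: if $f\notin M^2$, then no such pair $(g,h)$ with $\bar\mu\mid\bar g$ and $\bar\mu\mid\bar h$ can exist, i.e. for every $g,h$ with $f=\mu h+pg$ we must have $\gcd(\bar\mu,\bar h,\bar g)=1$, as claimed. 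I would also remark that, because $\bar\mu$ divides $\bar f$, at least one such representation $f=\mu h+pg$ always exists, so the statement is not vacuous.

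This argument is essentially a one-line computation once the lifting of the mod-$p$ divisibility conditions is set up correctly, so there is no serious obstacle. The only point requiring a touch of care is the lifting step: from $\bar\mu\mid\bar g$ in $\F_p[t]$ one gets $\bar g=\bar\mu\bar g_1$ for some $\bar g_1\in\F_p[t]$; picking any lift $g_1\in\Z[t]$ of $\bar g_1$, the polynomial $g-\mu g_1$ reduces to $0$ mod $p$, hence equals $pg_2$ for some $g_2\in\Z[t]$, giving $g=\mu g_1+pg_2$, and likewise for $h$. With the representation $f=\mu^2 h_1+p\mu(h_2+g_1)+p^2g_2$ in hand, membership in $M^2$ is immediate from the definition $M=p\Z[t]+\mu\Z[t]$, so $M^2$ is generated by $p^2$, $p\mu$, and $\mu^2$ as an ideal of $\Z[t]$.
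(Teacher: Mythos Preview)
Your proof is correct and essentially identical to the paper's own argument: both prove the contrapositive, use the irreducibility of $\bar\mu$ to deduce that $\bar\mu$ divides both $\bar g$ and $\bar h$, lift these divisibilities to write $h=\mu h_0+pu$ and $g=\mu g_0+pv$, and substitute to obtain $f=\mu^2 h_0+p\mu(u+g_0)+p^2v\in M^2$. Your version is slightly more detailed in justifying the lifting step and in observing that the statement is non-vacuous, but the method is the same.
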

\begin{proof}
Let $g,h$ be such that $f=\mu h+pg$ and $\gcd(\bar\mu,\bar g,\bar h)\ne1$. We have then $\gcd(\bar\mu,\bar g,\bar h)=\bar\mu$, so we can write 
$$
f=\mu(\mu h_0+pu)+p(\mu g_0+pv)=\mu^2h_0+p\mu(u+g_0)+p^2v\in M^2.
$$
\end{proof}

The following proposition proves that item 1 is equivalent to item 4 in Theorem \ref{Hilfsaatziffintro}. 

\begin{proposition}\label{UchLun}
The ring $L$ is a discrete valuation ring if and only if $f\notin M^2$. 
\end{proposition}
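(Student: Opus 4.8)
The plan is to assemble the two implications from the lemmas established just above, treating separately the harmless case in which $\bar\mu$ divides $\bar f$ with multiplicity one.

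For the direction ``$f\notin M^2\Rightarrow L$ is a discrete valuation ring'', I would first invoke Lemma \ref{lemgenmin}: since $\bar\mu$ divides $\bar f$, there exists at least one pair $g,h\in\Z[t]$ with $f=\mu h+pg$. Applying Lemma \ref{UchLunLem2} to this pair (this is exactly where the hypothesis $f\notin M^2$ is used) gives $\gcd(\bar\mu,\bar h,\bar g)=1$. Theorem \ref{Hilfsaatz} (Lüneburg's test) then applies verbatim and yields that $L$ is a discrete valuation ring.

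For the converse, assume $L$ is a discrete valuation ring and split according to the multiplicity $e$ of $\bar\mu$ as an irreducible factor of $\bar f$. If $e=1$, then $f\notin M^2$ with no further input: any element of $M^2=(p,\mu(t))^2$ reduces modulo $p$ to a multiple of $\bar\mu^2$, so $f\in M^2$ would force $\bar\mu^2\mid\bar f$, contradicting $e=1$. If $e\ge 2$, then the hypotheses of Lemma \ref{UchLunLem} are met (its assumptions are precisely that $\bar f$ has multiplicity $\ge 2$ at $\bar\mu$ and that $L$ is a discrete valuation ring), and that lemma delivers $f\notin M^2$ directly. Combining the two cases completes the equivalence.

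I do not expect a genuine obstacle here: the statement is a repackaging of Theorem \ref{Hilfsaatz}, Lemma \ref{UchLunLem}, and Lemma \ref{UchLunLem2}. The one point that deserves a word of care is that Lemma \ref{UchLunLem} is only stated under the multiplicity-$\ge 2$ hypothesis, so the multiplicity-one situation must be disposed of by the elementary reduction-mod-$p$ argument above, which fortunately needs nothing about the structure of $L$.
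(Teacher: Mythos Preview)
Your proposal is correct and follows essentially the same approach as the paper: both directions are assembled from Theorem \ref{Hilfsaatz}, Lemma \ref{UchLunLem}, and Lemma \ref{UchLunLem2}, with the same case split on the multiplicity of $\bar\mu$ in $\bar f$ for the converse and the same reduction-mod-$p$ argument to handle the multiplicity-one case.
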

\begin{proof}
If $f\notin M^2$, then by Lemma \ref{UchLunLem2}, there exists $g,h$ such that $f=\mu h+pg$ and $\gcd(\bar\mu,\bar g,\bar h)=1$. We can conclude by Theorem \ref{Hilfsaatz} that $L$ is a discrete valuation ring. 

Assume that $L$ is a discrete valuation ring. If $\bar\mu^2$ divides $\bar f$, then we are done by Lemma \ref{UchLunLem}. Suppose that $\bar\mu^2$ does not divide $\bar f$. If $f$ would be in $M^2$, then we would have 
$$
f=\mu^2A+p\mu B+p^2 C
$$
for some $A,B,C\in\Z[t]$, so reducing modulo $p$, we would have $\bar f=\bar\mu^2 \bar A$, which is a contradiction (note that $\bar A$ cannot be $0$ because $f$ is monic). 
\end{proof}

The following corollary finishes the proof of Theorem \ref{Hilfsaatziffintro}. 

\begin{corollary}\label{Hilfsaatziff}
If $L$ is a discrete valuation ring, then there exist $g,h\in\Z[t]$ such that $f=\mu h+pg$ and $\gcd(\bar \mu,\bar g,\bar h)=1$ in $\F_p[t]$.
\end{corollary}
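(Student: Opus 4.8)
The plan is to deduce this immediately from Proposition \ref{UchLun} and Lemma \ref{UchLunLem2}, the only real content being to observe that a decomposition $f=\mu h+pg$ exists at all. So the first step is: since $L$ is a discrete valuation ring, Proposition \ref{UchLun} gives $f\notin M^2$, where $M=p\Z[t]+\mu\Z[t]$.

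Next I would produce one concrete pair $g,h\in\Z[t]$ with $f=\mu h+pg$. By Lemma \ref{lemgenmin} we know $\bar\mu$ divides $\bar f$ in $\F_p[t]$, say $\bar f=\bar\mu\,\bar h_0$ in $\F_p[t]$. Lifting $\bar h_0$ to some $h_0\in\Z[t]$, the polynomial $f-\mu h_0$ reduces to $0$ modulo $p$, hence $f-\mu h_0=p\,g_0$ for some $g_0\in\Z[t]$; that is, $f=\mu h_0+p g_0$.

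Finally, apply Lemma \ref{UchLunLem2} to this decomposition: since $f\notin M^2$, we conclude $\gcd(\bar\mu,\bar g_0,\bar h_0)=1$ in $\F_p[t]$, and $g=g_0$, $h=h_0$ are the desired polynomials. (In fact the same argument shows the stronger statement that \emph{every} decomposition $f=\mu h+pg$ has this property, which is item 2 of Theorem \ref{Hilfsaatziffintro}.)

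There is essentially no obstacle here — the work was already done in Lemma \ref{UchLunLem}, Lemma \ref{UchLunLem2}, and Proposition \ref{UchLun}; the only point requiring a (routine) word is the existence of the pair $g,h$, which is just the fact that $\bar\mu\mid\bar f$ together with lifting to $\Z[t]$.
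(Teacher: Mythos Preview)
Your proof is correct. It takes a slightly different route from the paper's: the paper's argument takes any decomposition $f=\mu h+pg$ and does a case split --- either $\gcd(\bar\mu,\bar h)=1$ already, or else $\bar\mu^2\mid\bar f$ and Lemma~\ref{UchLunLem} applies directly to produce a decomposition with $\gcd(\bar\mu,\bar g)=1$ --- whereas you factor through the equivalence $1\Leftrightarrow 4$ of Proposition~\ref{UchLun} and then use Lemma~\ref{UchLunLem2} ($4\Rightarrow 2$). Both routes ultimately rest on Lemma~\ref{UchLunLem}, so the content is the same; your version has the minor advantage of avoiding the case split and, as you note, of yielding the stronger conclusion (item~2 of Theorem~\ref{Hilfsaatziffintro}) for free.
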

\begin{proof}
If $\gcd(\bar \mu,\bar h)=1$, then there is nothing to prove. If not, then $\bar\mu$ divides $\bar h$, so $\bar f$ has multiplicity $\ge 2$ at $\bar\mu$. Therefore, $\gcd(\bar\mu,\bar g)=1$ by  Lemma \ref{UchLunLem}.
\end{proof}


\section{Uchida versus L\"uneburg versus Dedekind}\label{UGL}

In all this section, we will assume that $\theta$ is an algebraic integer with minimal polynomial $f$, that $\Z[\theta]$ is not integrally closed, so we have a witness $\Pcal$ of L\"uneburg, a witness $M$ of Uchida, and a witness $(p,i)$ of Dedekind. We summarize the connections of the three criteria as follows. 
\begin{enumerate}
\item Given a witness $(p,i)$ for Dedekind, for any lift $\mu_i$ of $\varphi_i$ the ideals $(p,\mu_i(t))$ and $(p,\mu_i(\theta))$ are witnesses respectively of Uchida and L\"uneburg. 
\item Given a witness $M=(p,F(t))$ for Uchida, the ideal $\Pcal=(p,F(\theta))$ is a witness for L\"uneburg (and $\mu_\Pcal=F$), and for $i$ such that $\bar F=\varphi_i$, the pair $(p,i)$ is a witness for Dedekind. 
\item Given a witness $\Pcal$ for L\"uneburg, for $p$ below $\Pcal$, the ideal $(p,\mu_\Pcal(t))$ is a witness for Uchida, and for $i$ such that $\bar\mu_\Pcal=\varphi_i$, the pair $(p,i)$ is a witness for Dedekind. 
\end{enumerate}



\noindent\emph{From $\Pcal$ to $(p,i)$.} 

Suppose that you know $\Pcal$ such that for all $g,h\in\Z[t]$ such that $f=\mu_\Pcal h+pg$, we have $\gcd(\bar \mu_\Pcal,\bar g,\bar h)\ne1$ in $\F_p[t]$. Let $p$ be the rational prime below $\Pcal$ and $\bar f=\prod\varphi_i^{e_i}$ be the decomposition of $\bar f$ into irreducibles. Since $\bar\mu_\Pcal$ is irreducible and divides $\bar f$, we have $\bar\mu_\Pcal=\varphi_{i_0}$ for some $i_0$. Choose $\mu_{i_0}=\mu_\Pcal$, and the other $\mu_i$'s and $g$ arbitrary so that $f=\prod\mu_i^{e_i}+pg$. With $h=\mu_{i_0}^{e_{i_0-1}}\prod_{i\ne i_0}\mu_i^{e_i}$, we have $f=\mu_\Pcal h+pg$, so $\gcd(\bar \mu_\Pcal,\bar g,\bar h)\ne1$. Since $\bar\mu_\Pcal=\varphi_{i_0}$ is irreducible, it divides both $\bar g$ and $\bar h$, and in particular we have $e_{i_0}\ge2$. So $(p,i_0)$ is a Dedekind's witness. \\

\noindent\emph{From $(p,i_0)$ to $\Pcal$.} 

Suppose now that you know a Dedekind's witness $(p,i_0)$. So, writing $\bar f=\prod\varphi_i^{e_i}$, for any lift $\mu_i$ of each prime divisor $\varphi_i$ of $\bar f$ in $\F_p[t]$, and for any $g$ such that $f=\prod\mu_i^{e_i}+pg$, we have $e_{i_0}\ge2$ and $\varphi_{i_0}\mid \bar g$. Since $\varphi_{i_0}$ is irreducible and divides $\bar f$, the ideal $\Pcal_{i_0}=(p,\mu_{i_0}(\theta))$ is a prime ideal of $\Z[\theta]$ (above $p$) by Lemma \ref{Hilfsaatz0a}, hence it is maximal, for whatever lift $\mu_{i_0}$ we choose. Let $\mu$ be the monic polynomial of least degree such that $\mu(\theta)\in\Pcal_{i_0}$ and let $\Pcal=(p,\mu(\theta))$. Since $\bar\mu$ is irreducible and divides $\bar f$, $\Pcal$ is maximal by Lemma \ref{Hilfsaatz0a}. Since $\Pcal\subseteq\Pcal_{i_0}$, we have $\Pcal=\Pcal_{i_0}$. Moreover, since $\bar\mu$ is irreducible, it is equal to some $\varphi_j$. By the same argument as above, we have $\Pcal=\Pcal_j=(p,\mu_j(\theta))$ for any lift $\mu_j$ of $\varphi_j$. In particular, we have $\Pcal_j=\Pcal_{i_0}$. An argument of Dedekind allows to conclude that $j=i_0$. Suppose not. Since $\varphi_{i_0}$ and $\varphi_j$ are coprime, there exist $a,b\in\F_p[t]$ such that $a\varphi_{i_0}+b\varphi_j=1$. So for any lift, there exist $A,B\in\Z[t]$ and $C\in p\Z[t]$ such that $A\mu_{i_0}+B\mu_j=1+C$, so in particular $A(\theta)\mu_{i_0}(\theta)+B(\theta)\mu_j(\theta)=1+C(\theta)$, hence $1\in\Pcal$, which is absurd. We deduce that $\bar\mu=\varphi_j=\varphi_{i_0}$. So we can choose $\mu_{i_0}=\mu$. Choose any other lift for the rest of the $\mu_i$ and for $g$ so that $f=\prod\mu_i^{e_i}+pg$. For $h=\mu_{i_0}^{e_{i_0-1}}\prod_{i\ne i_0}\mu_i^{e_i}$ we have $f=\mu h+pg$, and the conclusion of Dedekind  $e_{i_0}\ge2$ and $\varphi_{i_0}\mid \bar g$ gives $\bar\mu\mid\bar h$ and $\bar\mu\mid \bar g$, hence $\gcd(\bar\mu,\bar h,\bar g)\ne 1$. \\

\noindent\emph{From $\Pcal$ to $M$.} 

By Theorem \ref{Hilfsaatziffintro} (4 implies 1), Uchida's witness $M$ is just the ideal $(p,\mu_\Pcal(t))$ in $\Z[t]$. \\

\noindent\emph{From $M$ to $\Pcal$.} 

Let $M$ be a maximal ideal of $\Z[t]$ such that $f\in M^2$, where $f$ is the minimal polynomial of $\theta$. It is well known that $M$ has to be of the form $(p,F(t))$, where $p$ is a rational prime and $F$ is a monic polynomial in $\Z[t]$ whose reduction $\bar F$ modulo $p$ is irreducible in $\F_p[t]$. Consider the factorization
$$
\bar f=\prod_{i=1}^n \varphi_i^{e_i}
$$
of $\bar f$ as a product of irreducible polynomials in $\F_p[t]$. Since $f\in M^2$, we have $\bar f=\psi \bar F^2$ for some $\psi\in\F_p[t]$. Note that $\psi$ is not the zero polynomial because $\bar f$ is monic. Therefore, we have
$$
\prod_{i=1}^n \varphi_i^{e_i}=\psi \bar F^2, 
$$
and we deduce that there exists a unique $i$ such that $\bar F=\varphi_i$. Let $\Pcal$ be the ideal $(p,F(\theta))$ of $\Z[\theta]$, which we know to be prime, hence maximal, by Lemma \ref{Hilfsaatz0a}. We prove now that $F$ is indeed $\mu_\Pcal$. By Lemma \ref{Hilfsaatz0}, we have $(p,\mu_\Pcal(\theta))=\Pcal$, hence $(p,\mu_\Pcal(\theta))=(p,F(\theta))$, so in particular we have $\mu_\Pcal(\theta)=\alpha p+\beta F(\theta)$ for some polynomials $\alpha, \beta\in\Z[t]$. In particular, we have $\bar\mu_\Pcal=\bar\beta\bar F$, and since $\bar\mu_\Pcal$ is irreducible, we deduce that $\bar\mu_\Pcal=\bar F$. We conclude that $F=\mu_\Pcal$ because they are monic polynomials, hence the localization of $\Z[\theta]$ at $\Pcal$ is not a discrete valuation ring by Theorem \ref{Hilfsaatziffintro} (1 implies 4).

\section{Application to a tower of nested quadratic square roots}\label{torre}

%
%
%

Recall that the sequence $(x_n)_{n\ge0}$ is defined by induction by $x_0=0$ and $x_{n+1}=\sqrt{\nu+x_n}$, where $\nu\ge3$ is an integer. Given a maximal ideal $\Pcal$ in $\Z[x_n]$, we will write $\mu_n=\mu_\Pcal$.

From now on we assume that $\nu$ is square-free and congruent to $2$ or $3$ modulo $4$ (we will use various facts from \cite{VyV} and \cite{Cas18} which make this assumption, which is necessary and sufficient for the the tower $\cup \Q(x_n)$ to increase at each step and for $\Z[x_1]$ to be the ring of integers of $\Q(x_1)$). 

We will use the following result from \cite[proof of Proposition 2.15]{VyV} and \cite[Proposition 3.2.1]{Cas18}.

\begin{proposition}\label{2-eis} For each $n\geq 1$, let $P_n$ be the minimal polynomial of $x_n$. We have:
\begin{enumerate}
	\item if $n$ is odd, then $P_n(t+a)$ is $2$-Eisenstein, where 
	$$
	a=
	\begin{cases}
	0 &\text{ if } \nu\equiv 2 \mod 4\\
	1 &\text{ if } \nu\equiv 3 \mod 4.
	\end{cases}
	$$ 
	\item If $n$ is even, then $P_n(t)$ is $2$-Eisenstein.
	\item $\disc(x_0)=1$, $\disc(x_1)=4\nu$ and $\disc(x_{n})=(\disc(x_{n-1}))^2 2^{2^n}P_n(0)$ for every $n\ge2$.
\end{enumerate}
Moreover, writing $f(t)=t^2-\nu$, we have $P_n(t)=f^{\circ n}(t)$, hence in particular $P_n$ has no monomial of odd degree. 
\end{proposition}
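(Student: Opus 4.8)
The plan is to handle the five assertions in a natural order, starting from the identity $P_n = f^{\circ n}$ with $f(t)=t^2-\nu$. First I would establish this identity by induction on $n$: since $x_{n+1}^2 = \nu + x_n$, the element $x_{n+1}$ is a root of $t^2 - \nu - x_n$, and substituting the (inductively known) relation $x_n = f^{\circ n}(x_n')$... more directly, one checks that $x_{n+1}$ is a root of $f^{\circ(n+1)}(t) = f(f^{\circ n}(t)) = (f^{\circ n}(t))^2 - \nu$, because $f^{\circ n}(x_{n+1})$ satisfies $(f^{\circ n}(x_{n+1}))^2 = \nu + f^{\circ(n-1)}(x_{n+1}) = \cdots$ telescoping down to $f^{\circ 0}(x_{n+1}) - $ wait; cleanly, set $y_k = f^{\circ k}(x_{n+1})$ and show $y_k = x_{n+1-k}$ by downward induction using $f(x_{j}) = x_j^2 - \nu = x_{j-1}$, so $y_{n+1} = x_0 = 0$, i.e. $x_{n+1}$ is a root of $f^{\circ(n+1)}$. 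Since $\deg f^{\circ(n+1)} = 2^{n+1} = [\Q(x_{n+1}):\Q]$ (the tower strictly increases under our hypothesis on $\nu$, a fact quoted from \cite{VyV}), and $f^{\circ(n+1)}$ is monic, it must be the minimal polynomial $P_{n+1}$. The statement that $P_n$ has no monomial of odd degree is then immediate, since $f(t) = t^2 - \nu$ is even and a composition of even polynomials (in the sense of parity of exponents) is even.

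Next I would prove the Eisenstein assertions (items 1 and 2). For $n$ even: since $P_n = f^{\circ n}$ and $f(t) \equiv t^2 \pmod 2$, we get $P_n(t) \equiv t^{2^n} \pmod 2$, so every coefficient except the leading one is even; it remains to check the constant term $P_n(0) = f^{\circ n}(0)$ is not divisible by $4$. Compute $f^{\circ n}(0)$ recursively: $c_1 = -\nu$, $c_{k+1} = c_k^2 - \nu$. Modulo $4$, using $\nu \equiv 2$ or $3$, one tracks $c_k \bmod 4$ along the orbit of $0$ under $t \mapsto t^2 - \nu$ and verifies $c_n \not\equiv 0 \pmod 4$ — this is a short finite check in $\Z/4\Z$ for each residue class of $\nu$. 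For $n$ odd we instead shift: $P_n(t+a) = f^{\circ n}(t+a)$ with $a=0$ or $1$ as in the statement. When $\nu \equiv 2$, $a=0$ and the same argument as the even case applies. When $\nu \equiv 3$, $a = 1$: note $f(t+1) = (t+1)^2 - \nu = t^2 + 2t + (1-\nu)$ and $1 - \nu \equiv 2 \pmod 4$; more to the point, one shows $f^{\circ n}(1) \equiv 1 \pmod 2$ for $n$ odd (since the orbit of $1$ under $t\mapsto t^2-\nu$ modulo $2$ with $\nu$ odd is $1 \mapsto 0 \mapsto 1 \mapsto 0 \mapsto \cdots$, hitting $1$ at odd steps), so $t=1$ is a simple root of $\bar P_n$, equivalently $P_n(t+1) \equiv t \cdot (\text{unit}) \cdot t^{2^n - 1}$... the precise claim is $P_n(t+1) \equiv t^{2^n} \pmod 2$, which needs $f^{\circ n}(1) \equiv 0$? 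I would recheck the parity bookkeeping here carefully against \cite{VyV,Cas18}; in any case, the Eisenstein conclusion is: all non-leading coefficients of the shifted polynomial are even, and the constant term $P_n(a)$ is $\equiv 2 \pmod 4$, which is again a finite computation in $\Z/4\Z$ following the orbit of $a$.

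For item 3, the discriminant formula: $\disc(x_0) = 1$ is trivial ($x_0 = 0$, minimal polynomial $t$), and $\disc(x_1) = \disc(t^2 - \nu) = 4\nu$ directly. For the recursion, I would use the standard composition formula for discriminants of iterates: if $P_n = f^{\circ n}$ then $P_{n+1}(t) = (P_n(t))^2 - \nu = f(P_n(t))$, and there is a classical identity expressing $\disc(f \circ g)$ in terms of $\disc(f)$, $\disc(g)$, the resultant $\mathrm{Res}(g', f\circ g)$ and leading coefficients. Concretely, $\disc(P_{n+1}) = \pm \disc(P_n)^{\deg f} \cdot \mathrm{Res}(P_n', f(P_n)) \cdot (\text{lc factors})$; since $f(t) = t^2 - \nu$ has $\disc(f) = 4\nu$ and $f'(t) = 2t$, one computes $\mathrm{Res}(P_n', 2P_n) $ type terms and the powers of $2$ collect to $2^{2^{n+1}}$, while the residual factor is $P_n(0)$ up to sign — this matches the claimed $\disc(x_{n}) = \disc(x_{n-1})^2 \, 2^{2^n} P_n(0)$. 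The main obstacle I anticipate is precisely this discriminant recursion: getting the exact power of $2$ and the sign/constant-term factor right requires careful use of the resultant-discriminant identity for compositions (or, alternatively, the tower formula $\disc(x_n) = \disc(x_{n-1})^{[\Q(x_n):\Q(x_{n-1})]} \cdot N_{\Q(x_{n-1})/\Q}(\disc(x_n/x_{n-1}))$ combined with $x_n^2 = \nu + x_{n-1}$, giving a relative discriminant dividing $4 x_n^2 = 4(\nu + x_{n-1})$), and reconciling the bookkeeping with the normalization in \cite{VyV,Cas18}. Everything else reduces to finite residue computations and the already-quoted fact that the tower strictly increases.
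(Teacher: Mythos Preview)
The paper does not prove this proposition at all: it is quoted verbatim from \cite[proof of Proposition 2.15]{VyV} and \cite[Proposition 3.2.1]{Cas18}, with no argument given here. So there is no ``paper's own proof'' to compare your attempt against.

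That said, your outline is essentially sound. The identification $P_n=f^{\circ n}$ via the telescoping $f(x_j)=x_{j-1}$ plus the degree count is clean. The Eisenstein items do reduce, as you say, to (i) checking $P_n(t+a)\equiv t^{2^n}\pmod 2$, which follows from $f(t)\equiv t^2\pmod 2$ when $\nu$ is even and from $f^{\circ 2}(t)\equiv t^4\pmod 2$ when $\nu$ is odd (so two iterations always land on a pure power of $t$ modulo $2$), together with (ii) a finite orbit check in $\Z/4\Z$ for the constant term. Your confusion in the odd-$\nu$, odd-$n$ case is only a bookkeeping slip: the orbit of $1$ under $t\mapsto t^2-\nu$ modulo $2$ is $1\mapsto 0\mapsto 1\mapsto\cdots$, so $P_n(1)\equiv 0\pmod 2$ precisely for odd $n$, which is what you want (you need the constant term of the shifted polynomial to be even, not odd).

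For the discriminant recursion, your second suggestion (the tower formula) is the cleaner one and goes through without obstacle once you note the key identity you did not write down: since $P_n(t)=P_{n-1}(t^2-\nu)$, one has $P_n(0)=P_{n-1}(-\nu)$, and therefore
\[
N_{\Q(x_{n-1})/\Q}\bigl(\nu+x_{n-1}\bigr)=\prod_{\alpha}\,(\nu+\alpha)=P_{n-1}(-\nu)=P_n(0),
\]
the product running over the roots of $P_{n-1}$. Combined with the relative discriminant $\disc\bigl(t^2-(\nu+x_{n-1})\bigr)=4(\nu+x_{n-1})$ and $[\Q(x_n):\Q(x_{n-1})]=2$, this yields exactly $\disc(x_n)=\disc(x_{n-1})^2\cdot 2^{2^n}\cdot P_n(0)$. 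So the ``main obstacle'' you anticipate dissolves once this norm computation is made explicit.
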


We will also need the following corollary. 

\begin{corollary}\label{nu-eis}
Let $p$ be a prime divisor of $\nu$. For each $n\geq 1$, $P_n$ is $p$-Eisenstein.
\end{corollary}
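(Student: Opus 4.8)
The plan is to induct on $n$, using the identity $P_n=f^{\circ n}$ from Proposition~\ref{2-eis} with $f(t)=t^2-\nu$, which yields the recursion $P_{n+1}(t)=P_n(t)^2-\nu$, together with the standing hypothesis that $\nu$ is square-free, so that $p\mid\nu$ but $p^2\nmid\nu$. Each $P_n$ is monic of degree $2^n$, so ``$p$-Eisenstein'' means that all coefficients below the leading one are divisible by $p$ and the constant term $C_n$ is not divisible by $p^2$.

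For the base case $n=1$ one has $P_1(t)=t^2-\nu$: the coefficient of $t$ vanishes, $p\mid\nu$, and $p^2\nmid\nu$, so $P_1$ is $p$-Eisenstein. For the inductive step, assume $P_n$ is $p$-Eisenstein; then $P_n(t)\equiv t^{2^n}\pmod p$, and reducing the recursion modulo $p$ while using $p\mid\nu$ gives $P_{n+1}(t)\equiv t^{2^{n+1}}\pmod p$, so every coefficient of $P_{n+1}$ other than the leading one is divisible by $p$. It remains to control the constant term: from $C_{n+1}=C_n^2-\nu$ and $p\mid C_n$ (induction hypothesis) we get $p^2\mid C_n^2$, whence $p\mid C_{n+1}$ but, since $p^2\nmid\nu$, also $p^2\nmid C_{n+1}$. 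Hence $P_{n+1}$ is $p$-Eisenstein. (Equivalently, one observes once and for all that $C_n\equiv-\nu\pmod{p^2}$ for every $n\ge1$, again by $C_{n+1}=C_n^2-\nu$ and $p\mid C_n$, which immediately gives $p\mid C_n$ and $p^2\nmid C_n$.)

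I do not expect any genuine obstacle: the argument is an elementary induction. The only point requiring a little care is the non-divisibility $p^2\nmid C_n$, which is precisely where the square-freeness of $\nu$ is used, and which forces one to track the constant-term recursion rather than merely reducing modulo $p$; everything else follows from $P_{n+1}=P_n^{\,2}-\nu$ and $p\mid\nu$.
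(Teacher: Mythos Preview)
Your proof is correct and follows essentially the same inductive strategy as the paper, using Proposition~\ref{2-eis} and the square-freeness of $\nu$ to control the constant term. The only cosmetic difference is that you compose in the order $P_{n+1}=P_1\circ P_n=P_n^2-\nu$, which makes the recursion $C_{n+1}=C_n^2-\nu$ (and hence $C_{n+1}\equiv -\nu\pmod{p^2}$) immediate, whereas the paper composes as $P_{n+1}=P_n\circ P_1$ and reads off the constant term after substituting $P_1(0)=-\nu$.
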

\begin{proof}
From the fact that $P_n(t)=P_1^{\circ n}(t)$, with $P_1(t)=t^2-\nu$, we have 
$$
P_{n+1}(t)=P_n\circ P_1(t)=t^{2^n}+\sum_{\ell=1}^{2^n-1}a_{\ell}P_1(t)^{\ell}+a_0
$$ 
for some integers $a_\ell$. Assume that $P_n$ is $p$-Eisenstein, so that $p$ divides each $a_\ell$ and $p^2$ does not divide $a_0$. The constant term of $P_{n+1}(t)$ being
$$
\sum_{\ell=1}^{2^n-1}a_{\ell}P_1(0)^{\ell}+a_0,
$$
if $p^2$ would divide it, then it would divide $a_0=P_n(0)$ (because $\nu$ divides $P_1(0)$ and each $a_\ell$), which would contradict the hypothesis of induction. 
\end{proof}


%


\subsection{Proof of item 1 of Theorem \ref{p2intro}}


\subsubsection{The cases $p=2$ and $p$ a prime divisor of $\nu$}

In this subsection, we will use $i_1$, $i_2$, $\pi_1$ and $\pi_2$ as in Section \ref{secHilfsaatz}:
$$
\begin{tikzcd}
\Z[x_n] \arrow[r, twoheadrightarrow, "\pi_2"] 
& \Z[x_n]/\Pcal \\
\Z \arrow[u, hook, "i_1"] \arrow[r, twoheadrightarrow, "\pi_1"]
&\F_p \arrow[u, hook, "i_2"']
\end{tikzcd}
$$

\begin{lemma} \label{pdnu}
Let $n\ge 1$ be an integer. Let $p$ be a prime divisor of $\nu$. For any maximal ideal $\Pcal\subseteq\Z[x_n]$ lying over $p\Z$, the localization of $\Z[x_n]$ at $\Pcal$ is a discrete valuation ring. 
\end{lemma}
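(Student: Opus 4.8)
The plan is to reduce the statement to the criterion of Theorem \ref{Hilfsaatziffintro} and then exploit that $P_n$ is $p$-Eisenstein by Corollary \ref{nu-eis}.

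First I would record the consequences of the Eisenstein property. Since $P_n$ is $p$-Eisenstein, its reduction modulo $p$ is $\bar P_n = t^{2^n}$ in $\F_p[t]$, so $\bar P_n$ has a single irreducible factor, namely $t$. In particular there is only one maximal ideal of $\Z[x_n]$ above $p\Z$: if $\Pcal$ lies over $p$, then $\pi_2(x_n)$ is a root of $\bar P_n = t^{2^n}$, hence $\pi_2(x_n)=0$, i.e. $x_n\in\Pcal$; and since $\Z[x_n]/(p,x_n)\cong\F_p[t]/(\bar P_n,t)=\F_p$, the ideal $(p,x_n)$ is already maximal, so $\Pcal=(p,x_n)$. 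The monic polynomial of least degree vanishing at $x_n$ modulo $\Pcal$ is then $\mu_\Pcal=t$ (degree $0$ being excluded because $1\notin\Pcal$).

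Next I would apply Theorem \ref{Hilfsaatziffintro} in the form ``item 1 $\Leftrightarrow$ item 4'': the localization $\Z[x_n]_\Pcal$ is a discrete valuation ring if and only if $P_n\notin(p,\mu_\Pcal(t))^2=(p,t)^2=(p^2,pt,t^2)$ in $\Z[t]$. But every polynomial in $(p^2,pt,t^2)$ has constant term divisible by $p^2$, whereas the constant term $P_n(0)$ of the $p$-Eisenstein polynomial $P_n$ is not divisible by $p^2$. Hence $P_n\notin(p,t)^2$, and the localization is a discrete valuation ring. (Equivalently, one may invoke L\"uneburg's test directly via Remark \ref{UchLunRem}(3): writing $P_n=t\,h+p\,g$ with $h=(P_n(t)-P_n(0))/t$ and $g=P_n(0)/p$ — legitimate since every non-leading coefficient of $P_n$ is divisible by $p$ — gives a decomposition in which $\bar g$ is a nonzero constant, so $\gcd(\bar\mu_\Pcal,\bar g,\bar h)=1$ automatically.)

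I do not expect a serious obstacle: the only things to verify are the routine identification of $\Pcal$ and $\mu_\Pcal$ and the elementary observation about constant terms of elements of $(p,t)^2$; all the substance is already contained in Corollary \ref{nu-eis} and Theorem \ref{Hilfsaatziffintro}. In effect this is the classical fact that the order generated by a root of an Eisenstein polynomial is maximal at the relevant prime, obtained here through the machinery of Section \ref{secHilfsaatz}.
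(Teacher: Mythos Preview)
Your proposal is correct and follows essentially the same route as the paper: use Corollary \ref{nu-eis} to get $P_n$ $p$-Eisenstein, deduce $\mu_\Pcal=t$, and conclude via L\"uneburg's criterion with a decomposition $P_n=t\,h+p\,g$ in which $\bar g$ is a nonzero constant (Remark \ref{UchLunRem}(3)). The only cosmetic difference is that the paper shows $x_n\in\Pcal$ by the tower induction $x_1^2=\nu\in\Pcal$, $x_{k+1}^2=\nu+x_k\in\Pcal$, whereas you read it off directly from $\bar P_n=t^{2^n}$ and the fact that $\Z[x_n]/\Pcal$ is a field; your additional phrasing via item 4 of Theorem \ref{Hilfsaatziffintro} is equivalent to the paper's use of Theorem \ref{Hilfsaatzif}.
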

\begin{proof}
Since $P_n$ is $p$-Eisenstein (by Corollary \ref{nu-eis}), we may write 
$$
P_n(t)=t h(t)+p m,
$$
with $m$ relatively prime to $p$ (note that $m\ne0$). We show that $x_n$ lies in $\Pcal$ (so $\mu_n(t)=t$, and we can conclude by Theorem \ref{Hilfsaatzif} because $\bar m$ is a non-zero constant). Indeed we have $x_1^2=\nu\in p\Z\subseteq\Pcal$, hence $x_1\in\Pcal$ because $\Pcal$ is prime. Since $x_n^2=\nu+x_{n-1}\in\Pcal$, it is immediate by induction that $x_n$ lies in $\Pcal$. 
\end{proof}

\begin{lemma} \label{22222}
Let $n\ge 1$ be an integer. For any $\Pcal$ maximal over $2\Z$, the localization of $\Z[x_n]$ at $\Pcal$ is a discrete valuation ring. 
\end{lemma}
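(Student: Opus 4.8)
The plan is to imitate the proof of Lemma \ref{pdnu}, exploiting the Eisenstein structure of $P_n$ at the prime $2$ provided by Proposition \ref{2-eis}, and to reduce everything to an application of Theorem \ref{Hilfsaatzif} (equivalently, Theorem \ref{Hilfsaatziffintro}). First I would fix a maximal ideal $\Pcal$ of $\Z[x_n]$ lying over $2\Z$ and show that $x_n\in\Pcal$: indeed $x_1^2=\nu$, and since $\nu\equiv 2$ or $3\pmod 4$ one has to argue a little, but in either case $P_1(t)=t^2-\nu$ is such that after the shift $t\mapsto t+a$ of Proposition \ref{2-eis} it becomes $2$-Eisenstein, so $2\mid P_1(a)$ forces $(x_1-a)^2\equiv 0$ and hence $x_1-a\in\Pcal$, i.e. $x_1\equiv a\pmod\Pcal$; then inductively $x_{n+1}^2=\nu+x_n$ and the same Eisenstein input at each level shows $x_n\equiv a_n\pmod\Pcal$ for an explicit $a_n\in\{0,1\}$ depending on the parity of $n$ (namely $a_n=0$ for $n$ even, $a_n=a$ for $n$ odd, read off from the two cases of Proposition \ref{2-eis}). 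Consequently $\mu_\Pcal(t)=t-a_n$, a linear polynomial.

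Next, with $\mu_\Pcal(t)=t-a_n$ in hand, I would write $P_n(t)$ in the form $P_n(t)=\mu_\Pcal(t)h(t)+2m$ with $m=P_n(a_n)\in\Z$. The point is that the $2$-Eisenstein property of $P_n(t+a_n)$ tells us precisely that the constant term $P_n(a_n)$ is divisible by $2$ but not by $4$; in particular $m$ is an odd multiple of a unit mod higher powers, so $\bar m\in\F_2$ is a non-zero constant — wait, more carefully, $\bar g$ here is $\bar m$, a nonzero constant in $\F_2$ since $m/2$... let me restate: writing $P_n=\mu_\Pcal h+2g$ we may take $g$ to be the constant $m/2\cdot$(unit) — the cleanest formulation is that $2\,\|\,P_n(a_n)$, so in the decomposition $P_n(t)=(t-a_n)h(t)+2g$ the polynomial $g$ reduces to a non-zero constant in $\F_2[t]$, because otherwise $4\mid P_n(a_n)$. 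Then $\gcd(\bar\mu_\Pcal,\bar g)=1$ trivially (item 3 of Remark \ref{UchLunRem}), and Theorem \ref{Hilfsaatzif} gives that $\Z[x_n]_\Pcal$ is a discrete valuation ring.

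The main obstacle I anticipate is the bookkeeping of the shift: Proposition \ref{2-eis} gives Eisenstein-ness of $P_n(t+a)$ only for $n$ odd (with $a$ depending on $\nu \bmod 4$) and of $P_n(t)$ for $n$ even, so the induction showing $x_n\equiv a_n\pmod\Pcal$ must carefully track that the shift alternates, and one must check the base cases $n=1$ (using $\nu\equiv 2,3\pmod 4$) and $n=2$ separately against Proposition \ref{2-eis}. Once the correct $a_n$ is identified, the rest is the same one-line application of L\"uneburg's test as in Lemma \ref{pdnu}, using that a $2$-Eisenstein polynomial has constant term exactly divisible by $2$, which is exactly what makes $\bar g$ a non-zero constant.
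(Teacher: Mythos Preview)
Your proposal is correct and follows essentially the same route as the paper: identify $\mu_\Pcal(t)=t-a_n$ with $a_n\in\{0,1\}$ determined by the parity of $n$ and by $\nu\bmod 4$, then use the $2$-Eisenstein property of $P_n(t+a_n)$ from Proposition \ref{2-eis} to see that in $P_n=(t-a_n)h+2g$ the constant $g=P_n(a_n)/2$ is odd, and conclude via Theorem \ref{Hilfsaatzif} (Remark \ref{UchLunRem}(3)). The only cosmetic difference is that the paper computes $x_n\bmod\Pcal$ directly from the recursion $x_{k+1}^2=\nu+x_k$ in the residue field (using that squaring is injective in characteristic $2$), rather than appealing to ``Eisenstein input at each level''; your base-case argument via $(x_1-a)(x_1+a)\equiv(x_1-a)^2\in\Pcal$ is fine, and the inductive step is the same computation either way.
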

\begin{proof}
Recall that $\nu$ is congruent to $2$ or $3$ modulo $4$. We have 
$$
\pi_2(x_1)^2=\pi_2(\nu)
=i_2(\pi_1(\nu))=i_2(\nu+2\Z)
=\begin{cases}
\Pcal&\textrm{if $\nu$ is even,}\\
1+\Pcal&\textrm{if $\nu$ is odd.}
\end{cases}
$$
If $\nu$ is odd, then $\pi_2(x_1)=\pm 1+\Pcal=1+\Pcal$. For any integer $k\ge1$, we have 
$$
\pi_2(x_{k+1})^2=\pi_2(\nu+x_k)=
\begin{cases}
\pi_2(x_k)&\textrm{if $\nu$ is even}\\
1+\pi_2(x_k)&\textrm{if $\nu$ is odd}. 
\end{cases}
$$ 
In particular, for even $\nu$ we have $\pi_2(x_n)=\Pcal$ for each $n$. If $\nu$ is odd, then we have $\pi_2(x_2)^2=1+1+\Pcal=\Pcal$, hence $\pi_2(x_2)=\Pcal$. Therefore, for odd $\nu$, we have $\pi_2(x_n)=1+\Pcal$ for odd $n$, and $\pi_2(x_n)=\Pcal$ for even $n$. 

For $n$ even, and for any $n$ in the case that $\nu$ is even, this proves that $\mu_n(t)=t$. Since the polynomial $P_n(t)$ is $2$-Eisenstein (Proposition \ref{2-eis}), we can write 
$$
P_n(t)=t h(t)+2m
$$
with $m$ odd. We have then $\gcd(\bar\mu_n,\bar m)=1$, so we are done by Theorem \ref{Hilfsaatzif}. 

For $n\ge1$ odd and $\nu$ odd, since $\pi_2(x_n+1)=\Pcal$, we can take $\mu_n(t)=t-1$. 

By Proposition \ref{2-eis}, $P_n(t+1)$ is $2$-Eisenstein for $n$ odd and $\nu$ congruent to $3$ modulo $4$, so we can write
$$
P_n(t+1)=t h(t)+2m
$$
with $m$ odd, hence
$$
P_n(y)=\mu_n(y)h(y-1)+2m,
$$
and we have $\gcd(\bar\mu_n,\bar m)=1$, so the lemma is proven for odd $\nu$ and $n$, again thanks to Theorem \ref{Hilfsaatzif}.

\end{proof}

\subsubsection{General prime $p$}

Fix an odd prime $p$ which does not divide $\nu$. Recall that $C_n$ denotes the constant term of $P_n$. If $p$ does not divide any $C_n$, then it does not divide the discriminant of any $x_n$ by item 3 of Proposition \ref{2-eis}, hence $\Z[x_n]_{\Pcal}$ is a discrete valuation ring for any $n$ by Remark \ref{UchLunRem}. So we assume that $p$ divides at least one $C_n$ and we let $m=n(p)$ be the least positive integer such that $p$ divides $C_m$. Note that we can assume $p\ne2$ and $n(p)\ge2$ thanks to Lemmas \ref{pdnu} and \ref{22222}. 

By Proposition \ref{2-eis}, we know that $P_n$ has no monomial of odd degree. We have 
$$
P_0(t)=t,\quad P_1(t)=t^2-\nu
\quad\textrm{and}\quad P_2(t)=t^4-2\nu t^2+\nu^2-\nu.
$$
For $n\ge 1$, write $D_n$ for the coefficient of $t^2$ in $P_n$. Let $R_n\in\Z[t]$ be such that 
$$
P_n(t)=R_n(t)t^4+D_nt^2+C_n
$$ 
For any integer $k$, let $s(k)$ denote the set of primes that divide $k$. 

\begin{lemma}\label{lemgenpx2}
For every $n\ge 2$, we have $\bigcup_{1\le k\le n} s(C_k)= s(D_{n+1})$. 
\end{lemma}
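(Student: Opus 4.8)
The plan is to exploit the recursion $P_{n+1}(t)=P_n(P_1(t))=P_n(t^2-\nu)$ together with the observation, already recorded in the excerpt, that $P_n$ has only even-degree monomials, so that $P_n(t)=R_n(t)t^4+D_nt^2+C_n$ makes sense. The key computational input is a closed-form relation for $C_{n+1}$ and $D_{n+1}$ in terms of the previous data: plugging $t^2-\nu$ into $P_n(t)=R_n(t)t^4+D_nt^2+C_n$ and extracting the constant term gives $C_{n+1}=R_n(-\nu)\nu^4+D_n\nu^2+C_n=P_n(-\nu)$, which since $P_n$ is even equals $P_n(\nu)$; more usefully, writing $P_n(s)=\sum_j b_{n,j}s^j$ with $b_{n,0}=C_n$, one gets $C_{n+1}=P_n(-\nu)$ and, differentiating $P_n(t^2-\nu)$ twice and setting $t=0$, $D_{n+1}=-2\nu\, P_n'(-\nu)$ (the coefficient of $t^2$ comes only from the linear term in the Taylor expansion of $P_n$ around $-\nu$, since the $t^2$ in $t^2-\nu$ is raised to the first power). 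So the heart of the matter is the identity
$$
C_{n+1}=P_n(-\nu),\qquad D_{n+1}=-2\nu\,P_n'(-\nu).
$$

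With these in hand I would argue as follows. First, note $C_1=-\nu$, so $C_{n+1}=P_n(-\nu)$ is consistent with iteration: $C_{n+1}=P_n(C_1)=P_1^{\circ n}(C_1)=P_1^{\circ(n+1)}(0)$, matching $P_{n+1}(0)=C_{n+1}$ — a useful sanity check but I mainly need the derivative formula. By the chain rule for $P_{n}=P_1^{\circ n}$ we have $P_n'(-\nu)=\prod_{k=0}^{n-1}P_1'\big(P_1^{\circ k}(-\nu)\big)=\prod_{k=0}^{n-1}2\,P_1^{\circ k}(-\nu)$, and $P_1^{\circ k}(-\nu)=P_1^{\circ(k+1)}(0)\cdot$(sign issues aside, using that $P_1$ is even) $=C_{k+1}$. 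Hence
$$
D_{n+1}=-2\nu\cdot 2^{n}\prod_{k=1}^{n}C_k
$$
up to a sign, so $s(D_{n+1})=s(2\nu)\cup\bigcup_{k=1}^{n}s(C_k)$. Since $\nu\mid C_1$ (indeed $C_1=-\nu$) we have $s(\nu)\subseteq s(C_1)$, and for $n\ge 2$ one checks $2\mid C_2=\nu^2-\nu$ (as $\nu(\nu-1)$ is even), so $s(2\nu)\subseteq s(C_1)\cup s(C_2)\subseteq\bigcup_{1\le k\le n}s(C_k)$. This yields $s(D_{n+1})=\bigcup_{1\le k\le n}s(C_k)$, which is exactly the claim.

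The step I expect to be the main obstacle is getting the derivative formula $D_{n+1}=-2\nu\,P_n'(-\nu)$ cleanly and tracking signs through the evenness of the polynomials — in particular making sure that $P_1^{\circ k}(-\nu)$ really equals $C_{k+1}$ (not $-C_{k+1}$) at each stage, which follows because $P_1$ is an even polynomial so $P_1(-x)=P_1(x)$ and hence $P_1^{\circ k}(-\nu)=P_1^{\circ(k-1)}(P_1(\nu))=P_1^{\circ(k-1)}(P_1(-\,\cdot\,))$ arguments collapse all the ambiguity after one step; alternatively one just records $D_{n+1}=-2\nu\,P_n'(-\nu)$ and notes $P_n'(-\nu)=-P_n'(\nu)$ by evenness, and computes $s(P_n'(\nu))$ directly from the factored chain-rule expression, where each factor $P_1^{\circ k}(\nu)$ visibly has the same prime support as $C_{k+1}=P_1^{\circ(k+1)}(0)=P_1(P_1^{\circ k}(\nu)\cdot(\pm1))$. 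Everything else — the inclusion $s(2\nu)\subseteq s(C_1)\cup s(C_2)$ and the assembly of the union — is routine, using only $\nu$ square-free, $\nu\equiv 2,3\pmod 4$, and $n\ge 2$. I would also remark that the formula for $D_{n+1}$ is essentially item 3 of Proposition \ref{2-eis} in disguise (the discriminant recursion there is governed by the same product $\prod C_k$), so consistency with that proposition provides a further check.
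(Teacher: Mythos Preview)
Your approach is correct and yields the result, but it differs from the paper's and contains a small (harmless) arithmetic slip. The Taylor-expansion step you describe actually gives $D_{n+1}=P_n'(-\nu)$, not $-2\nu\,P_n'(-\nu)$: the coefficient of $t^2$ in $P_n(t^2-\nu)=\sum_j b_j(t^2-\nu)^j$ is $\sum_j j\,b_j(-\nu)^{j-1}=P_n'(-\nu)$ (and indeed $D_2=P_1'(-\nu)=-2\nu$ checks). The extraneous factor $-2\nu$ does no damage to your conclusion since $s(\nu)=s(C_1)$, but it should be dropped. Your chain-rule identity $P_1^{\circ k}(-\nu)=P_1^{\circ(k+1)}(0)=C_{k+1}$, which follows directly from $-\nu=P_1(0)$, is correct as stated and needs none of the sign-chasing through evenness you worry about; it yields the clean closed form $D_{n+1}=2^{\,n}\prod_{k=1}^{n}C_k$, whence $s(D_{n+1})=\{2\}\cup\bigcup_{k=1}^{n}s(C_k)=\bigcup_{k=1}^{n}s(C_k)$ for $n\ge2$ because $2\mid C_2=\nu(\nu-1)$.

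The paper proceeds differently: it composes in the other order, $P_{n+1}=P_1\circ P_n=P_n^{2}+C_1$, squares the decomposition $R_n t^4+D_n t^2+C_n$, and reads off the two-term recursion $D_{n+1}=2C_nD_n$, then inducts from the base case $D_3=-4\nu(\nu^2-\nu)$. Your closed form is exactly the solution of that recursion with $D_1=1$, so the two routes are equivalent; the paper's is a touch more elementary (no derivatives, just squaring a polynomial), while yours produces the explicit product $D_{n+1}=2^{\,n}\prod_{k=1}^{n}C_k$ in one stroke and makes the link to the discriminant recursion in Proposition~\ref{2-eis}(3) transparent.
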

\begin{proof}
We have 
$$
R_1(t)=0, \quad D_1=1\quad C_1=-\nu,
$$
$$
R_2(t)=1, \quad D_2=-2\nu\quad C_2=\nu^2-\nu.
$$
Since $x_0=0$, by Proposition \ref{2-eis}, for every $n\ge 1$, we have
$$
\begin{aligned}
P_{n+1}&= P_1\circ P_n=P_n^2+C_1\\
       &=(R_n(t)t^4+D_nt^2+C_n)^2+C_1\\
       &=R_{n+1}(t)t^4+2C_nD_nt^2+C_n^2+C_1, 
\end{aligned}
$$
hence $D_{n+1}=2C_nD_n$. For $n=2$, this gives $D_3=2C_2D_2=-4\nu(\nu^2-\nu)$, hence $s(D_3)=s(C_1)\cup s(C_2)$ (because $\nu^2-\nu$ is even, so $2$ divides $C_2$), so the formula is true for $n=2$. Since $D_{n+1}=2C_nD_n$ and $2\in s(D_n)$, we can conclude immediately by induction on $n\ge 2$. 
\end{proof}

For any $n\ge 0$ and $k\ge 1$ integers, we have 
$$
P_{n+k}=P_k\circ P_n=P_n^2(P_n^2R_k(P_n)+D_k)+C_k, 
$$
hence, with $k=n(p)$, we obtain
$$
\bar P_{n+n(p)}=\bar P_{n(p)}\circ \bar P_n=\bar P_n^2(\bar P_n^2\bar R_{n(p)}(\bar P_n)+\bar D_{n(p)})
$$
in $\F_p[t]$. We deduce by induction that we have divisibility sequences for every $r\ge0$:
$$
\bar P_r | \bar P_{r+n(p)} | \bar P_{r+2n(p)}|\cdots.
$$
Also for any $n\ge 1$ we have
$$
\bar P_{n+n(p)}=\bar P_n\circ \bar P_{n(p)}=\bar P_{n(p)}^2(\bar P_{n(p)}^2\bar R_n(\bar P_{n(p)})+\bar D_n)+\bar C_n,
$$
hence $\bar C_{n+{n(p)}}=\bar C_n$, so we deduce by induction that if $r$ is the remainder of the division of $n$ by $n(p)$, then $\bar C_n=\bar C_{r}$ (note that $r$ may be $0$).

\begin{lemma}\label{lempprel}
For any $k$ and $\ell$ non-negative integers which are distinct modulo $n(p)$, we have $\bar P_k$ coprime with $\bar P_\ell$. 
\end{lemma}
\begin{proof}
Write $\ell=qn(p)+r$ and $k=q'n(p)+r'$, with $r$ and $r'$ such that $0\le r,r'<n(p)$. Assume $\ell>k$ (so $\ell-k\ge1$). We have 
$$
\bar P_{\ell}=\bar P_{\ell-k}\circ \bar P_{k}=\bar P_{k}^2(\bar P_{k}^2\bar R_{\ell-k}(\bar P_k)+\bar D_{\ell-k})+\bar C_{\ell-k},
$$
so if $\rho$ divides $\bar P_k$ and $\bar P_\ell$, then it divides $\bar C_{\ell-k}=\bar C_{|r-r'|}$, which is not $\bar 0$ (because $n(p)$ is the least positive integer such that $C_{n(p)}$ is divisible by $p$).
\end{proof}

By Lemma \ref{lemgenpx2} and the definition of $n(p)$, $p$ does not divide $D_{k}$ for $0<k\le n(p)$, so $\bar D_k\ne\bar 0$ for those $k$, and we deduce that $\bar P_n$ is coprime with $\bar P_n^2\bar R_k(\bar P_n)+\bar D_k$ for any $n\ge0$. 

\begin{lemma}\label{lempsep}
For any $n\ge0$, the polynomial $\bar P_n^2\bar R_{n(p)}(\bar P_n)+\bar D_{n(p)}$ is separable. 
\end{lemma}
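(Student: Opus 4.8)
The plan is to read the polynomial in the statement as a single composition and to combine the factorization of $\bar P_{n(p)}$ forced by $p\mid C_{n(p)}$ with the classical derivative formula for iterates. Since $p\mid C_{n(p)}$ while $p\nmid D_{n(p)}$ (the latter noted just above, using Lemma~\ref{lemgenpx2}), reducing $P_{n(p)}(t)=R_{n(p)}(t)t^4+D_{n(p)}t^2+C_{n(p)}$ modulo $p$ gives
$$
\bar P_{n(p)}(t)=t^2\,g(t),\qquad g(t):=\bar R_{n(p)}(t)\,t^2+\bar D_{n(p)},\qquad g(0)=\bar D_{n(p)}\ne 0,
$$
and then $g\bigl(\bar P_n(t)\bigr)=\bar P_n^2\,\bar R_{n(p)}(\bar P_n)+\bar D_{n(p)}$ is exactly the polynomial in the statement, so it is enough to show that $g\circ\bar P_n$ is separable. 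Throughout I will use that, because $P_m=f^{\circ m}$ with $f(t)=t^2-\nu$ (Proposition~\ref{2-eis}), one has $\bar P_m=\bar P_{m-j}\circ\bar P_j$ for $0\le j\le m$ and, by the chain rule, $(\bar P_m)'(t)=2^m\prod_{j=0}^{m-1}\bar P_j(t)$; since $p$ is odd, a root $\gamma$ of $\bar P_m$ is a multiple root precisely when $\bar P_j(\gamma)=0$ for some $j$ with $0\le j\le m-1$.

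First I would show that $g$ is separable. If $\gamma$ is a multiple root of $\bar P_{n(p)}$, pick $j\le n(p)-1$ with $\bar P_j(\gamma)=0$; then $0=\bar P_{n(p)}(\gamma)=\bar P_{n(p)-j}\bigl(\bar P_j(\gamma)\bigr)=\bar P_{n(p)-j}(0)=\bar C_{n(p)-j}$. For $1\le j\le n(p)-1$ this is impossible by the minimality of $n(p)$, so $j=0$ and $\gamma=0$. Thus $0$ is the only multiple root of $\bar P_{n(p)}$; since $g=\bar P_{n(p)}/t^2$ has only the \emph{nonzero} roots of $\bar P_{n(p)}$ as roots, $g$ is separable, and in particular $g'(\gamma)\ne0$ at every root $\gamma$ of $g$.

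Then I would treat $h:=g\circ\bar P_n$. Suppose, for contradiction, that $\beta$ is a multiple root of $h$, and set $\gamma:=\bar P_n(\beta)$. From $h(\beta)=g(\gamma)=0$ and $g(0)\ne0$ we get $\gamma\ne0$; and from $h'(\beta)=g'(\gamma)\,\bar P_n'(\beta)=0$ together with $g'(\gamma)\ne0$ we get $\bar P_n'(\beta)=0$, so $\bar P_j(\beta)=0$ for some $0\le j\le n-1$, whence $\gamma=\bar P_n(\beta)=\bar P_{n-j}(0)=\bar C_{n-j}$ with $n-j\ge1$. On the other hand $\bar P_{n(p)}(\gamma)=\gamma^2 g(\gamma)=0$, so
$$
\bar C_{n(p)+n-j}=\bar P_{n(p)}\bigl(\bar P_{n-j}(0)\bigr)=\bar P_{n(p)}(\gamma)=0,
$$
which by the periodicity $\bar C_m=\bar C_{m\bmod n(p)}$ and the minimality of $n(p)$ forces $n(p)\mid(n-j)$; but then $\bar C_{n-j}=0$, i.e. $\gamma=0$, contradicting $\gamma\ne0$. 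Hence $h$ has no multiple root, so it is separable, which is the claim.

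I expect the only genuinely delicate step to be the first one — identifying $0$ as the unique multiple root of $\bar P_{n(p)}$; once the derivative identity for iterates is in hand this is short, and the rest is bookkeeping with $\bar P_m=\bar P_{m-j}\circ\bar P_j$ and the residue periodicity of $(C_m)$, both already established. It is also worth recalling that we have arranged $p$ odd and $n(p)\ge2$, which is exactly what makes $2^m$ invertible in $\F_p$ and guarantees that $\bar P_{n(p)}$ really carries the factor $t^2$.
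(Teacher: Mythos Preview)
Your proof is correct. Both your argument and the paper's rest on the derivative identity $\bar P_m'=2^m\prod_{j=0}^{m-1}\bar P_j$ together with the minimality of $n(p)$, but the organization differs. You recognize the polynomial as $g\circ\bar P_n$ with $\bar P_{n(p)}(t)=t^2 g(t)$, first show that $g$ itself is separable (equivalently, that $0$ is the \emph{only} multiple root of $\bar P_{n(p)}$), and then lift to general $n$ via the chain rule and the residue periodicity $\bar C_m=\bar C_{m\bmod n(p)}$. The paper instead argues in one pass: assuming an irreducible $\rho$ with $\rho^2$ dividing the polynomial, it notes $\rho^2\mid\bar P_{n+n(p)}$ while $\rho\nmid\bar P_n$, expands $\bar P_{n+n(p)}'$ as the product above, and then uses Lemma~\ref{lempprel} (coprimality of $\bar P_k$ and $\bar P_\ell$ for $k\not\equiv\ell\pmod{n(p)}$) together with the divisibility chain $\bar P_r\mid\bar P_{r+n(p)}\mid\cdots$ to eliminate every factor. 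Your two-step decomposition is a bit cleaner and bypasses the explicit appeal to Lemma~\ref{lempprel}, trading it for the periodicity of $(\bar C_m)$ --- which is, of course, the same underlying information.
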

\begin{proof}
Let $\rho\in\F_p[t]$ be irreducible. Recall that we have
$$
\bar P_{n+n(p)}=\bar P_n^2(\bar P_n^2\bar R_{n(p)}(\bar P_n)+\bar D_{n(p)}).
$$
For the sake of contradiction, assume that $\rho^2$ divides $\bar P_n^2\bar R_{n(p)}(\bar P_n)+\bar D_{n(p)}$ (so in particular $\rho$ does not divide $\bar P_n$). Since $\rho^2$ divides $\bar P_{n+n(p)}$, $\rho$ divides its derivative $\bar P_{n+n(p)}'$. Since $\bar P_{n+n(p)}=\bar P_{n+n(p)-1}^2-\nu$, we have $\bar P_{n+n(p)}'=2\bar P_{n+n(p)-1}' \bar P_{n+n(p)-1}$, and one easily proves by induction that we have 
$$
\bar P_{n+n(p)}'=2^{n+n(p)} \bar P_{0}' \prod_{\ell=1}^{n+n(p)} \bar P_{n+n(p)-\ell}.
$$
Since $\rho$ divides $\bar P_{n+n(p)}$, by Lemma \ref{lempprel}, it does not divide $\bar P_{n+n(p)-\ell}$ for any $\ell$ which is not a multiple of $n(p)$, and it does not divide $\bar P_n$, hence it does not divide any $\bar P_k$ with $k\le n$ congruent to $n$ modulo $n(p)$. 
Therefore, $\rho$ divides $\bar P_0'(t)=1$, which is absurd. 
\end{proof}

We can now conclude the proof of item 1 of Theorem \ref{p2intro}. 

\begin{proposition}\label{p2}
Assume $x_0=0$ and let $n\ge1$ be an integer. If $p^2$ does not divide $C_{n(p)}$, then for any $\Pcal$ maximal over $p\Z$, the localization of $\Z[x_n]$ at $\Pcal$ is a discrete valuation ring. 
\end{proposition}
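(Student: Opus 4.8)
The goal is to show that for a prime $p$ that divides some $C_n$ (odd, not dividing $\nu$, with $n(p)\ge 2$), if $p^2\nmid C_{n(p)}$ then $\Z[x_n]_\Pcal$ is a discrete valuation ring for every $n$ and every maximal $\Pcal$ over $p\Z$. By Theorem \ref{Hilfsaatziffintro} it suffices to exhibit, for each $n$, a decomposition $P_n = \mu_n h + p g$ with $\gcd(\bar\mu_n,\bar g,\bar h)=1$ in $\F_p[t]$, where $\mu_n=\mu_\Pcal$. The natural strategy is: (i) identify $\bar\mu_n$ as a specific irreducible factor of $\bar P_n$, using the divisibility/periodicity structure already established; (ii) use the separability result of Lemma \ref{lempsep} to control multiplicities; and (iii) convert the separability information into the required gcd condition via the hypothesis $p^2\nmid C_{n(p)}$.

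**Key steps in order.** First I would reduce to the case where $p$ actually divides $\disc(x_n)$, i.e.\ where $\bar P_n$ is not separable — otherwise Remark \ref{UchLunRem}(1) applies directly. Writing $n = q\, n(p) + r$ with $0\le r < n(p)$, the congruence $\bar P_n = \bar P_{r}$-type periodicity of the constant terms and the iterated factorization
$$
\bar P_{n} = \bar P_{r}\cdot \big(\text{product of factors of the form }\bar P_j^2\bar R_{n(p)}(\bar P_j)+\bar D_{n(p)}\big)
$$
(obtained by peeling off blocks of length $n(p)$ from $\bar P_n = \bar P_{n(p)}^{\circ q}\circ \bar P_r$) shows that every repeated irreducible factor $\rho$ of $\bar P_n$ must divide one of the blocks $\bar P_j^2\bar R_{n(p)}(\bar P_j)+\bar D_{n(p)}$ — but each such block is separable by Lemma \ref{lempsep} and, by Lemma \ref{lempprel}, the blocks are pairwise coprime and coprime to $\bar P_r$ except through the tower structure. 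Tracking this carefully, the only way $\rho^2\mid\bar P_n$ is that $\rho$ divides $\bar P_r$, i.e.\ $\rho\mid \bar C_r = \bar C_{n(p)}$ up to the periodicity (here $r$ plays the role making $\bar C_r\ne 0$ impossible unless $r=0$, forcing attention on the constant term). This pins down $\bar\mu_n=\rho$ as a linear-or-higher factor dividing $\bar P_r$, and in fact $\bar\mu_n$ divides $\bar C_{n(p)}$ in the relevant sense.

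**Converting to the gcd condition.** Having located $\bar\mu_n$, I would write $P_n = \mu_n h + p g$ using the factorization $P_n = R_n t^4 + D_n t^2 + C_n$ together with a lift of the factorization of $\bar P_n$. Since $p\nmid D_k$ for $0<k\le n(p)$ (Lemma \ref{lemgenpx2}) and $p^2\nmid C_{n(p)}$ by hypothesis, the constant term of the ``error'' polynomial $g$ — which is essentially $C_n/p$ up to units, using $\bar C_n = \bar C_r$ and the superrigid divisibility so that $p\,\|\,C_n$ whenever $n\equiv n(p)$-class — is a unit mod $p$ at the factor $\bar\mu_n$, i.e.\ $\bar\mu_n\nmid \bar g$. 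Hence $\gcd(\bar\mu_n,\bar g)=1$, and Theorem \ref{Hilfsaatziffintro} (item 3 $\Rightarrow$ item 1) gives that $\Z[x_n]_\Pcal$ is a discrete valuation ring.

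**Main obstacle.** The delicate point is the bookkeeping in step (i)–(ii): correctly identifying $\bar\mu_n$ among the iterated compositional factors and verifying that $p^2\nmid C_{n(p)}$ really does translate into $\bar\mu_n\nmid\bar g$ at every level $n$, not just at $n=n(p)$. This requires the super-rigid divisibility statement — that $p$ divides $C_{n}$ with \emph{exactly} order $1$ for all $n$ in the residue class of $n(p)$ mod $n(p)$ once $p^2\nmid C_{n(p)}$ — which in turn rests on Lemma \ref{lempsep} and the discriminant recursion in Proposition \ref{2-eis}(3). Making the passage from ``the block polynomials are separable'' to ``the $p$-adic valuation of $C_n$ stays $1$'' precise, uniformly in $n$, is where the real work lies; everything after that is the routine application of Theorem \ref{Hilfsaatziffintro}.
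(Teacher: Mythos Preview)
Your overall strategy is in the right spirit, but there is a concrete error in the ``locate $\bar\mu_n$'' step. You claim that after fully peeling $\bar P_n=\bar P_{n(p)}^{\circ q}\circ\bar P_r$ into $\bar P_r$ times a product of blocks $\bar P_j^2\bar R_{n(p)}(\bar P_j)+\bar D_{n(p)}$, the only way an irreducible $\rho$ can satisfy $\rho^2\mid\bar P_n$ is $\rho\mid\bar P_r$. This is false. From $\bar P_{j+n(p)}=\bar P_j^2\cdot(\bar P_j^2\bar R_{n(p)}(\bar P_j)+\bar D_{n(p)})$ one sees that \emph{every} irreducible factor of $\bar P_{n-n(p)}$, not just those of $\bar P_r$, appears to even multiplicity in $\bar P_n$. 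In particular a factor of one of the earlier separable blocks, once it enters $\bar P_{j+n(p)}$, gets squared at the next stage. So your identification of $\bar\mu_n$ as a factor of $\bar P_r$ breaks down whenever $q\ge 2$, and the subsequent argument about the constant term of $g$ (which implicitly uses $\bar\mu_n(0)=0$, i.e.\ $\bar\mu_n=t$) has no foundation for general $\bar\mu_n$.

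The paper avoids this bookkeeping entirely by peeling off only \emph{one} block: write $P_n=P_{n(p)}\circ P_{n-n(p)}$, so $\bar P_n=\bar P_{n-n(p)}^2\cdot(\bar P_{n-n(p)}^2\bar R_{n(p)}(\bar P_{n-n(p)})+\bar D_{n(p)})$. If $\bar\mu_n$ divides the second factor, Lemma~\ref{lempsep} and $p\nmid D_{n(p)}$ force $\bar\mu_n$ into the square-free part of $\bar P_n$, and Remark~\ref{UchLunRem}(2) finishes. If instead $\bar\mu_n\mid\bar P_{n-n(p)}$, write $P_{n-n(p)}=\mu_n h+pg_0$ with \emph{no} constraint on $g_0$; substituting into $P_n=P_{n-n(p)}^2(\cdots)+C_{n(p)}$ gives $P_n=\mu_n(\cdots)+p\big(p g_0^2(\cdots)+C_{n(p)}/p\big)$, and the reduction $\bar g=\overline{C_{n(p)}/p}$ is a \emph{nonzero constant} in $\F_p$ because $p^2\nmid C_{n(p)}$. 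Thus $\gcd(\bar\mu_n,\bar g)=1$ trivially, with no need to know anything further about $\bar\mu_n$ or to invoke super-rigid divisibility of $(C_n)$ at levels beyond $n(p)$. The dichotomy at a single step is what makes the argument clean; your full iterated decomposition introduces exactly the difficulty you flag as the ``main obstacle'', and the attempted resolution of it is where the gap lies.
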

\begin{proof}
For any $\mu_n\in\Z[t]$ of minimal degree and monic such that $\mu_n(x_n)\in\Pcal$, we have already proven that $\bar\mu_n\in\F_p[t]$ is the minimal polynomial of $\pi_2(x_n)$. 

Assume $n\ge n(p)$, so we can write
$$
\bar P_{n}=\bar P_{n(p)}\circ\bar P_{n-n(p)}=\bar P_{n-n(p)}^2(\bar P_{n-n(p)}^2\bar R_{n(p)}(\bar P_{n-n(p)})+\bar D_{n(p)}).
$$ 
Note that if $n=n(p)$, then $P_{n-n(p)}$ is just $t$, and we let the reader convince her/himself that the following argument works fine also in that case. If $\bar\mu_n$ divides 
$$
\bar P_{n-n(p)}^2\bar R_{n(p)}(\bar P_{n-n(p)})+\bar D_{n(p)},
$$ 
then on the one hand it does not divide $\bar P_{n-n(p)}^2$ (because $p$ does not divide $D_{n(p)}$), and on the other hand, by Lemma \ref{lempsep}, it appears with multiplicity $1$ in the factorization of $\bar P_{n-n(p)}^2\bar R_{n(p)}(\bar P_{n-n(p)})+\bar D_{n(p)}$, so it lies in the square-free part of $\bar P_n$ and we are done by Remark \ref{UchLunRem}. So assume that $\bar\mu_n$ does not divide $\bar P_{n-n(p)}^2\bar R_{n(p)}(\bar P_{n-n(p)})+\bar D_{n(p)}$, so it divides $\bar P_{n-n(p)}$, and write $P_{n-n(p)}=\mu_nh+pg_0$. We have
$$
\begin{aligned}
P_n&=P_{n-n(p)}^2[P_{n-n(p)}^2R_{n(p)}(P_{n-n(p)})+D_{n(p)}]+C_{n(p)}\\
&=(\mu_nh+pg_0)^2[(\mu_nh+pg_0)^2R_{n(p)}(P_{n-n(p)})+D_{n(p)}]+C_{n(p)}\\
&=\mu_n(\dots)+p^2g_0^2(\dots)+C_{n(p)}\\
&=\mu_n(\dots)+p(pg_0^2(\dots)+C_{n(p)}/p)\\
&=\mu_n(\dots)+pg,
\end{aligned}
$$
where $\bar g$ is a constant distinct from $0$ modulo $p$ by hypothesis. We conclude by Theorem \ref{Hilfsaatzif}.

Assume $n<n(p)$. We have \cite[Prop. 3.2.1]{Cas18}
$$
\disc(x_0)=1 \text{ and } \disc(x_1)=2^2\nu,
$$
and for any $k\geq 2$ 
$$
\disc(x_n)=(\disc(x_{n-1}))^2\cdot 2^{2^n} C_n,
$$
and since for each $k\le n$, $p$ does not divide $C_{k}$, it does not divide the discriminant of $x_n$ (because $p\ne 2$), so we are done by Remark \ref{UchLunRem}.
\end{proof}

\subsection{Proof of item 2 of Theorem \ref{p2intro}}

We start with a simple lemma. 

\begin{lemma}\label{p2back0}
For any rational prime $p$, the ideal $(p,x_{n(p)})$ of $\Z[x_{n(p)}]$ is proper. 
\end{lemma}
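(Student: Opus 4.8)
The plan is to exhibit a surjective ring homomorphism from $\Z[x_{n(p)}]$ onto a nonzero ring whose kernel contains both $p$ and $x_{n(p)}$; this forces $(p,x_{n(p)})$ to be a proper ideal. Since $P_{n(p)}$ is the minimal polynomial of $x_{n(p)}$, we have a canonical isomorphism $\Z[x_{n(p)}]\cong\Z[t]/(P_{n(p)}(t))$, under which $x_{n(p)}$ corresponds to the class of $t$.

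\textbf{Key steps.} First I would consider the ``evaluation at $0$'' map $\Z[t]\to\F_p$ sending $t\mapsto 0$ and reducing coefficients modulo $p$; its image is $\F_p$, which is nonzero. Next, I would check that this map factors through $\Z[t]/(P_{n(p)}(t))$: the image of $P_{n(p)}(t)$ is $P_{n(p)}(0)\bmod p=C_{n(p)}\bmod p$, and this is $0$ precisely because $p$ divides $C_{n(p)}$ by the very definition of $n(p)$. Hence we obtain a well-defined surjection $\varphi\colon\Z[x_{n(p)}]\to\F_p$ with $\varphi(x_{n(p)})=0$ and $\varphi(p)=0$. Therefore $(p,x_{n(p)})\subseteq\ker\varphi$, and since $\ker\varphi$ is a proper (in fact maximal) ideal of $\Z[x_{n(p)}]$ lying over $p\Z$, the ideal $(p,x_{n(p)})$ is proper. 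Equivalently, one may phrase the argument as $\Z[x_{n(p)}]/(p,x_{n(p)})\cong\F_p[t]/(t,\bar P_{n(p)}(t))=\F_p[t]/(t)\cong\F_p\ne0$, using that $t\mid\bar P_{n(p)}$ in $\F_p[t]$ because $\bar P_{n(p)}(0)=\bar C_{n(p)}=0$.

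\textbf{Main obstacle.} There is essentially no obstacle here beyond correctly invoking the hypothesis: the only point requiring care is verifying that the evaluation map descends to the quotient, which is exactly the statement that $p\mid C_{n(p)}$. The lemma is a preparatory observation for the proof of item 2 of Theorem \ref{p2intro}, whose content it ensures that $(p,x_{n(p)})$ sits inside at least one maximal ideal $\Pcal$ over $p\Z$.
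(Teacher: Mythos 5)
Your proof is correct and rests on exactly the same fact as the paper's, namely that $p\mid C_{n(p)}$ forces the constant term of $P_{n(p)}$ to vanish modulo $p$. The paper phrases this as a proof by contradiction (comparing constant terms in a polynomial identity $P_{n(p)}W=pb-ta-1$), whereas you package it more cleanly as the computation $\Z[x_{n(p)}]/(p,x_{n(p)})\cong\F_p[t]/(t,\bar P_{n(p)})\cong\F_p\ne0$; these are the same argument in different clothing.
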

\begin{proof}
Write $n=n(p)$. If not, then we would have $pb(x_n)+x_na(x_n)=1$ for some $a(t),b(t)\in\Z[t]$. Let $W\in\Z[t]$ be such that $P_nW=pb(t)-ta(t)-1$ (such $W$ exists because $P_n$ is the minimal polynomial of $x_n$). Modulo $p$, the constant term on the right-hand side is $-1$, while the constant term on the left-hand side is $0$ (because $p$ divides $C_{n(p)}$).
\end{proof}

The following proposition proves item 2 of Theorem \ref{p2intro}. 

\begin{proposition}\label{p2back}
Let $p$ be a rational prime. If every maximal ideal $\Pcal$ of $\Z[x_{n(p)}]$ above $p$ the localization of $\Z[x_{n(p)}]$ at $\Pcal$ is a discrete valuation ring, then $p^2$ does not divide $C_{n(p)}$. 
\end{proposition}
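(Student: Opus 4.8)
The strategy is contrapositive: assume $p^2 \mid C_{n(p)}$ and produce a maximal ideal $\Pcal$ above $p$ in $\Z[x_{n(p)}]$ whose localization is \emph{not} a discrete valuation ring; by Theorem~\ref{Hilfsaatziffintro} (1~$\Leftrightarrow$~4), this amounts to exhibiting $\Pcal$ with $P_{n(p)} \in (p,\mu_\Pcal(t))^2$. Write $n = n(p)$. First I would locate a natural candidate for $\Pcal$. By Lemma~\ref{p2back0}, the ideal $(p, x_n)$ of $\Z[x_n]$ is proper; since $\bar x_n = t$, the polynomial $t$ is irreducible and (as $p \mid C_n$) divides $\bar P_n$ in $\F_p[t]$, so by Lemma~\ref{Hilfsaatz0a} the ideal $\Pcal := (p, x_n)$ is prime, hence maximal. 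Moreover $\bar\mu_\Pcal$ is the minimal polynomial of $\pi_2(x_n) = \bar 0$, so $\mu_\Pcal(t) = t$ and $(p, \mu_\Pcal(t)) = (p, t)$ in $\Z[t]$.

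The heart of the argument is then the explicit computation showing $P_n \in (p, t)^2 = (p^2, pt, t^2)$. Writing $P_n(t) = R_n(t)t^4 + D_n t^2 + C_n$ as in the text, the terms $R_n(t)t^4$ and $D_n t^2$ already lie in $(t^2) \subseteq (p,t)^2$, since $P_n$ has no monomial of odd degree (Proposition~\ref{2-eis}), so in fact every term of $P_n$ of positive degree is a multiple of $t^2$. The only remaining term is the constant $C_n$, and by the standing assumption $p^2 \mid C_n$, so $C_n \in (p^2) \subseteq (p,t)^2$. Hence $P_n \in (p, t)^2 = (p, \mu_\Pcal(t))^2$, which by Theorem~\ref{Hilfsaatziffintro} shows $\Z[x_n]_\Pcal$ is not a discrete valuation ring — contradicting the hypothesis. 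Therefore $p^2 \nmid C_n = C_{n(p)}$.

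The only genuinely delicate point is verifying that $\Pcal = (p, x_n)$ is a legitimate witness, i.e.\ that it is maximal and that $\mu_\Pcal(t) = t$; both follow cleanly from Lemma~\ref{Hilfsaatz0a} together with Lemma~\ref{p2back0} and Lemma~\ref{lemgenmin}, once one notes $p \mid C_n$ forces $\bar t \mid \bar P_n$. Everything else is the one-line observation that a polynomial with no odd-degree monomials and $p^2$-divisible constant term lies in $(p,t)^2$. I expect no serious obstacle: the $2$-adic Eisenstein structure has already been exploited in Lemmas~\ref{pdnu} and~\ref{22222} to force $\mu_n(t) = t$ in the analogous even cases, and here the situation is if anything simpler because we are \emph{constructing} a bad $\Pcal$ rather than ruling one out.
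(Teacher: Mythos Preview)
Your proof is correct and rests on the same core identification as the paper's: both pick the maximal ideal $\Pcal$ containing $(p,x_n)$ (via Lemma~\ref{p2back0}), observe that $\mu_\Pcal(t)=t$, and exploit that $P_n$ has no odd-degree monomials. The difference is purely which face of Theorem~\ref{Hilfsaatziffintro} is invoked. The paper argues directly using condition~3 (Corollary~\ref{Hilfsaatziff}): assuming the localization is a DVR, it writes $P_n = th + pg$ with $\gcd(t,\bar h,\bar g)=1$, notes that $t\bar h = \bar P_n$ forces $t\mid\bar h$, hence $p\nmid g(0)$, and evaluating at $0$ gives $C_n = pg(0)$. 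You instead argue contrapositively via condition~4: if $p^2\mid C_n$, then $P_n = (\text{multiple of }t^2) + C_n \in (p,t)^2$ immediately. Your route is a line shorter and perhaps more transparent, but the two are really the same argument read through equivalent conditions of the main theorem.
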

\begin{proof}
Write $n=n(p)$. Since $(p,x_n)$ is a proper ideal of $\Z[x_n]$, it is contained in a maximal ideal $\Pcal$, which lies above $p$. Since $x_n$ lies in $\Pcal$, we have $\bar\mu_n=t$. By Corollary \ref{Hilfsaatziff}, there exists $g,h\in\Z[t]$ such that $P_n=\mu_nh+pg$ and $\gcd(t,\bar h,\bar g)$ is $1$, which is equivalent to the fact that either $\gcd(t,\bar h)$ or $\gcd(t,\bar g)$ is $1$. 

The relation 
\begin{equation}\label{eqback}
\mu_nh+pg=P_n=t^2(\dots)+C_n
\end{equation} 
gives $t\bar h=t^2(\dots)$ modulo $p$, so $t$ divides $\bar h$. Therefore, we have $\gcd(t,\bar g)=1$, and in particular the constant term of $g$ is not divisible by $p$. Evaluating in $0$ in Equation \eqref{eqback}, we obtain $pg(0)=C_n$, hence $p^2$ does not divide $C_n$. 
\end{proof}

%
%

\noindent Xavier Vidaux (corresponding author)\\
Universidad de Concepci\'on, Concepci\'on, Chile\\
Facultad de Ciencias F\'isicas y Matem\'aticas\\
Departamento de Matem\'atica\\
Casilla 160 C\\
Email: xvidaux@udec.cl\\

\noindent Carlos R. Videla\\
Mount Royal University, Calgary, Canada\\
Department of Mathematics and Computing\\
email: cvidela@mtroyal.ca

\end{document}